\def\diag{\textrm{diag}}
\newtheorem{theorem}{\bf Theorem}
\newtheorem{lemma}{\bf Lemma}
\newtheorem{example}{\bf Example}
\newtheorem{definition}{\bf Definition}
\newtheorem{remark}{\bf Remark}
\begin{document}
\title{\bf Semi-convergence of the APSS method for a class of nonsymmetric three-by-three singular saddle point problems }
%\author{\small\bf  Hamed Aslani$^\dag$, Davod Khojasteh Salkuyeh$^\ddag$\thanks{\noindent Corresponding author. \newline 
%		Emails:	hamedaslani525@gmail.com (H. Aslani), khojasteh@guilan.ac.ir (D.K. Salkuyeh) },  \\[2mm]
%	\textit{{\small $^\dag$Faculty of Mathematical Sciences, University of Guilan, Rasht, Iran}} \\
%	\textit{{\small $^\ddag$Faculty of Mathematical Sciences, and Center of Excellence for Mathematical Modelling,}}\\
%	\textit{{\small Optimization and Combinational Computing (MMOCC), University of Guilan, Rasht, Iran}} \\
%}
%\author{\small\bf  Hamed Aslani$^\dag$, Davod Khojasteh Salkuyeh$^\ddag$\thanks{\noindent Corresponding author. \newline 
%		Emails:	hamedaslani525@gmail.com (H. Aslani), khojasteh@guilan.ac.ir (D.K. Salkuyeh), f.beik@vru.ac.ir (F.P.A. Beik) }, 
%	Fatemeh Panjeh Ali Beik$^\S$ \\[2mm]
%	\textit{{\small $^\dag$Faculty of Mathematical Sciences, University of Guilan, Rasht, Iran}} \\
%	\textit{{\small $^\ddag$Faculty of Mathematical Sciences, and Center of Excellence for Mathematical Modelling,}}\\
%	\textit{{\small Optimization and Combinational Computing (MMOCC), University of Guilan, Rasht, Iran}} \\
%	\textit{{\small $^\S$Department of Mathematics, Vali-e-Asr University of Rafsanjan,P.O. Box 518, Rafsanjan, Iran}} \\
%}
\author{\small\bf  Hamed Aslani$^\dag$, Davod Khojasteh Salkuyeh$^{\ddag}$\thanks{\noindent Corresponding author. \newline 
		Emails:	hamedaslani525@gmail.com (H. Aslani), khojasteh@guilan.ac.ir (D.K. Salkuyeh)}\\[2mm]
	\textit{{\small $^\dag$Faculty of Mathematical Sciences, University of Guilan, Rasht, Iran}} \\
	\textit{{\small $^\ddag$Faculty of Mathematical Sciences, and Center of Excellence for Mathematical Modelling,}}\\
	\textit{{\small Optimization and Combinational Computing (MMOCC), University of Guilan, Rasht, Iran}}  \\
}
\date{}
\maketitle
\vspace{-0.5cm}
\noindent\hrulefill\\
{\bf Abstract.}  For nonsymmetric block three-by-three singular saddle point problems arising from the Picard iteration method for a class of mixed finite element scheme, recently  Salkuyeh et al. in (D.K. Salkuyeh, H. Aslani, Z.Z. Liang, An alternating positive semi-definite splitting preconditioner for the three-by-three block saddle point problems, Math. Commun. 26 (2021) 177-195)  established an alternating positive semi-definite splitting (APSS) method. In this work, we analyse the semi-convergence  of the APSS method for solving a class of nonsymmetric block three-by-three singular saddle point problems. The APSS induced preconditioner is applied to improve the semi-convergence  rate of the flexible GMRES (FGMRES) method. Experimental results are designated to support the theoretical results. These results show that the served preconditioner is efficient compared with FGMRES without a preconditioner. \\[-3mm]

\noindent{\it \footnotesize Keywords}: {\small iterative methods, sparse matrices, saddle point,  semi-convergence ,  preconditioning, Krylov methods. }\\
\noindent
\noindent{\it \footnotesize AMS Subject Classification}: 65F10, 65F50, 65F08. \\

\noindent\hrulefill\\

\pagestyle{myheadings}\markboth{H. Aslani, D.K. Salkuyeh}{Semi-convergence  analysis of the APSS method  for  3$\times$ 3 singular saddle point problems   }
\thispagestyle{empty}

\section{Introduction}

We are interested in solving  the following large and sparse block three-by-three saddle point system by iteration methods,
\begin{equation}\label{eq1s}
%\mathcal{A} {\bf x} \equiv
\left(\begin{array}{ccc}
{A} & {B^{T}} & {0} \\
{B} & {0} & {C^{T}} \\
{0} & {C} & {0}
\end{array}\right)\left(\begin{array}{l}
{x} \\
{y} \\
{z}
\end{array}\right)=\left(\begin{array}{l}
{f} \\
{g} \\
{h} 
\end{array}\right) 
%\equiv \mathbf{b},
\end{equation}
where $A\in \mathbb{R}^{n \times n}, B\in \mathbb{R}^{m \times n}  $ and $C\in  \mathbb{R}^{l \times m}. $ Here $ f  \in \mathbb{R}^{n }, g \in \mathbb{R}^{m}$ and $h \in \mathbb{R}^{l}$ are known and ${\bf x}=\left(x; y; z\right)$  is an unknown  vector that has to be determined. The coefficient matrix of the system \eqref{eq1}  is of order $\textbf{n}=m+n+l$. Many practical applications produce linear systems of the form \eqref{eq1}; for example  in application of  the Picard iteration method for a class of mixed finite element scheme for stationary magnetohydrodynamics models \cite{Picard} and finite element method to solve the time-dependent Maxwell equations having discontinuous coefficients  in  polyhedral domains with Lipschitz boundary. References \cite{Ap1,Ap2} provide additional instances and references therein. 

In this paper, we make an attempt to solve the following system, 
\begin{equation}\label{eq1}
\mathcal{A} {\bf x} \equiv\left(\begin{array}{ccc}
{A} & {B^{T}} & {0} \\
{-B} & {0} & {-C^{T}} \\
{0} & {C} & {0}
\end{array}\right)\left(\begin{array}{l}
{x} \\
{y} \\
{z}
\end{array}\right)=\left(\begin{array}{l}
{f} \\
{g} \\
{h} 
\end{array}\right) \equiv \mathbf{b},
\end{equation}
which is an equivalent form of \eqref{eq1s}. Note that  the coefficient matrix in \eqref{eq1s} is symmetric, however, $\mathcal{A}$  is  nonsymmetric. Nevertheless, $\mathcal{A}$ has some good properties.  For instance, $\mathcal{A}$ is positive semi-definite. It means that $\mathcal{A} + \mathcal{A}^T $ is Symmetric Positive Semi-Definite (SPSD). This property can greatly improve the performance of the GMRES method for solving the system. Some other features are available in \cite{Huang1}.

Consider the two-by-two block saddle point problem 
\begin{equation}\label{eq2}
\mathcal{\tilde{A} } {\bf \tilde{x}}\equiv\left(\begin{array}{ccc}
	\tilde{A} & \tilde{B}^{T} \\
	-\tilde{B} & 0
\end{array}\right)\left(\begin{array}{l}
	\tilde{x} \\
	\tilde{y}
\end{array}\right)=\left(\begin{array}{l}
	\tilde{f} \\
	\tilde{g}
	\end{array}\right) \equiv \mathbf{\tilde{b}}.
\end{equation}
%The matrices $\tilde{A}$ and $\tilde {C}$ are assumed to be Symmetric Positive Definite (SPD) and Symmetric Positive Semi-Definite (SPSD) matrices, respectively and $\tilde{B}$ has full row rank. Under these assumptions, the matrix $\mathcal{\tilde{A} }$ defined in \eqref{eq2} is nonsingular; see \cite{} for more details. In recent years, Numerous effective iterative schemes and their corresponding preconditioners have been suggested for solving \eqref{eq2}. For instance, the Hermitian and Skew-Hermitian Splitting (HSS) method was proposed for solving \eqref{eq2} by Bai et al. in []. In addition, modifications of the HSS method was developed in []. More block preconditioners [], and  Uzawa-type methods [] are studied.  
The matrix $\tilde{A}\in \mathbb{R}^ {p\times p}$ is assumed to be a Symmetric Positive Definite (SPD), $\tilde {B} \in \mathbb{R}^{p\times q}$ with $p>q$ is a matrix that $rank(B)= r<q$ (i.e., $\tilde {B}$ is a rank-deficient matrix), $\tilde{f}\in \mathbb{R}^{p}$ and $\tilde{g}\in \mathbb{R}^{q}$. Thereby, the linear system \eqref{eq2} leads to a two-by-two singular saddle point problem.
 %The Stokes problem
%\begin{equation}\label{Stokes}
%\begin{dcases}
%-\Delta \mathbf{u}+\nabla p=0, & \text { in } \Omega, \\
%\nabla \cdot \mathbf{u}=0, & \text { in } \Omega.
%\end{dcases}
%\end{equation}	
%can be obtained as an application, in which a suitable boundary conditions are applied on the side and bottom points. $\mathbf{u}$ and $p$  symbolize the velocity vector field and the pressure scalar field, respectively. In addition, $\Delta$ and $\nabla$ refer to the vector Laplacian in $R^{2}$ and  the gradient, respectively.
In the context of the two-by-two singular saddle point problem, Bai \cite{Bai1} established the HSS method for singular saddle point problem and also derived some conditions for guaranteeing the semi-convergence  of the HSS method. Li et al. \cite{GHSS} generalized the HSS method for solving non-Hermitian, positive semi-definite, and singular linear systems. They studied semi-convergence  analysis of the Generalized HSS (GHSS) method. In addition, an upper bound for the semi-convergence  factor was derived. In \cite{GPHSS} a Generalized Preconditioned Hermitian and skew-Hermitian splitting method (GPHSS) was considered to solve the singular saddle point problems. The semi-convergence  of the GPHSS scheme was proved under some conditions. In addition, the authors obtained the induced preconditioner and discussed the eigenvalues of the preconditioned matrix. Then, the Local Hermitian and skew-Hermitian (LHSS) method and the Modified LHSS (MLHSS) method were established in \cite{Huang1}. They  also gave the semi-convergence  conditions. Motivated by the Uzawa method, Chao et al. \cite{SOR} designed the Uzawa-SOR method for singular saddle point problems. Generalization of the Uzawa-SOR method was investigated in \cite{AOR}. The authors established the Uzawa-AOR scheme to solve \eqref{eq2}. The distribution of the eigenvalues of the iteration matrix and the semi-convergence  properties were given.  Numerical results indicate that the Uzawa-AOR method outperforms the Uzawa method \cite{Uzawa1}, the parameterized Uzawa method \cite{Bai11}, and the Uzawa-SOR method \cite{SOR}. Some other efficient methods  for the singular saddle point problem of the form \eqref{eq2}  were studied in \cite{Ma, Zhang,Zhang2}. 

Note that some partitioning techniques can be employed to represent the three-by-three coefficient matrix in \eqref{eq1} as:
$$\left(\begin{array}{cccc}
	A & B^{T} & \vdots & 0 \\
	B & 0 & \vdots & C^{T} \\
	\cdots & \cdots & \vdots & \cdots \\
	0 & C & \vdots & 0
\end{array}\right) \qquad \text{or} \qquad \left(\begin{array}{cccc}
A & \vdots  & B^{T} & 0 \\
\cdots & \vdots & \cdots & \cdots \\
B & \vdots & 0 & C^{T} \\
0 & \vdots& C & 0
\end{array}\right),$$
which are the standard two-by-two  saddle point problem. 

Many available preconditioning schemes for \eqref{eq2}, can not be  implemented for solving \eqref{eq1s}. This is because of  different properties in two problems.
Note that the matrix $\mathcal{A}$ described in \eqref{eq1} is nonsingular if $A$ is SPD and the matrices $B$ and $C$ are of full row rank
(see \cite{nonsingular1,nonsingular2,nonsingular3}).  
In recent years, many researchers have considered the three-by-three saddle point problem. Huang et al. \cite{Huang1} arranged a block diagonal preconditioner for solving the nonsingular system of the form \eqref{eq1}. The exact and inexact versions of the preconditioner were also studied.  Then, in \cite{Cao} the shift splitting (SS) and the relaxed shift splitting (RSS) method were designated.  Xie et al.  \cite{CAMWA} considered three efficient preconditioners. The authors analyzed the eigenvalues of the corresponding preconditioned matrices. Aslani et al. \cite{Aslani} presented a new method for solving \eqref{eq1} when $A$ is SPD and $B,C$ are full row rank matrices. Convergence properties of the method was derived.  Moreover, the spectral properties of the preconditioned matrix were discussed. Abdolmaleki et al. \cite{Abdolmaleki} from another perspective organized a block three-by-three diagonal preconditioner for \eqref{eq1}. A suitable estimation strategy for lower and upper bounds of eigenvalues of the preconditioned matrix was considered. In \cite{A1} an exact parameterized block SPD preconditioner and its inexact version for a class of block three-by-three saddle point problems was proposed. The authors also estimated the eigenvalue bounds for the preconditioned matrix.

In \cite{Liang}, Liang and Zhang proposed the Alternating Positive Semi-definite Splitting (APSS) iteration method for double saddle point problems. 
Using the idea of \cite{Liang}, Salkuyeh et al. in \cite{Salkuyeh} applied the APSS method for solving the problem \eqref{eq1} and proved its convergence. 
 In the case of $C$ being rank-deficient, the coefficient matrix \eqref{eq1} is singular. Accordingly, the linear system \eqref{eq1} is labeled as a singular three-by-three saddle point problem. In this work, the three-by-three large, sparse, and singular saddle point problem is considered and the  semi-convergence  analysis of the APSS method is discussed.

The structure of this paper is as follows. The paper is started with a review of the APSS method and its corresponding induced preconditioner. In Section \ref{sec2}, we focus on the semi-convergence  properties of the APSS method for solving \eqref{eq1}. Unconditionally semi-convergent for the APSS iteration method are derived in Section \ref{Sec3}. A strategy is given to estimate the parameter of the proposed method in Section \ref{Sec4}.   Section \ref{Sec5} is devoted to give some numerical tests  to support the theoretical results. The end of the paper will be companied by some brief conclusions.    

\section{Review of the APSS method}\label{sec2}

Let us first give a brief study of the APSS method. Consider the following decomposition  for the coefficient matrix $\mathcal{A}$  in \eqref{eq1}: 
\begin{equation}\label{Split}
\mathcal{A}=\mathcal{A}_{1}+\mathcal{A}_{2},
\end{equation}
where 
\begin{equation}\label{eq111}
\mathcal{A}_{1}=\left(\begin{array}{ccc}
{A} & {B^{T}} & {0} \\
{-B} & {0} & {0} \\
{0} & {0} & {0}
\end{array}\right), \quad
\mathcal{A}_{2}=\left(\begin{array}{ccc}
{0} & {0} & {0} \\
{0} & {0} & {- C^{T}} \\
{0} & {C} & {0}
\end{array}\right).
\end{equation}
Let $\alpha> 0$ be a given parameter. Based on the decomposition \eqref{Split},
the following splittings for the matrix $\mathcal{A}$  can be stated
$$\mathcal{A}=(\alpha \mathcal{I} + \mathcal{A}_{1} ) - (\alpha \mathcal{I} - \mathcal{A}_{2} )=(\alpha \mathcal{I} + \mathcal{A}_{2} )- (\alpha \mathcal{I} - \mathcal{A}_{1} ),$$
where $\mathcal{I}$ is the identity matrix of order $\mathbf{n}$. Now, by using these splittings the APSS method can be written as
\begin{numcases}{}
(\alpha \mathcal{I}+\mathcal{A}_{1}) x^{\left(k+\frac{1}{2}\right)}=(\alpha \mathcal{I}- \mathcal{A}_{2}) x^{(k)}+b, \notag \\
(\alpha \mathcal{I}+\mathcal{A}_{2}) x^{(k+1)}=(\alpha \mathcal{I}- \mathcal{ A}_{1}) x^{\left(k+\frac{1}{2}\right)}+ b, \notag
\end{numcases}
where $x^{(0)} \in \mathbb{R}^{\mathbf{n}}$ is an initial guess.
 By eliminating $x^{(k+\frac{1}{2})},$  the iteration scheme can be rewritten as the stationary form
\begin{equation}\label{eq3}
x^{k+1}= \mathcal{T_{\alpha}} x^{k} + f,
\end{equation}
with 
\begin{equation}\label{eqt}
\mathcal{T_{\alpha}}=(\alpha \mathcal{I} + \mathcal{A}_{2})^{-1} (\alpha \mathcal{I} - \mathcal{A}_{1}) (\alpha \mathcal{I} + \mathcal{A}_{1})^{-1} (\alpha \mathcal{I} - \mathcal{A}_{2}),
\end{equation}
and
$$f=2 \alpha (\alpha \mathcal{I}+\mathcal{A}_{2})^{-1} (\alpha \mathcal{I}+\mathcal{A}_{1})^{-1} b .$$ 
Similar to the Hermitian and Skew-Hermitian splitting (HSS) iteration method \cite{HSS}, if we set 
$$\tilde{\mathcal{M}_{\alpha}}=\frac{1}{2 \alpha} (\alpha \mathcal{I}+\mathcal{A}_{1}) (\alpha \mathcal{I}+\mathcal{A}_{2}), \quad \tilde{\mathcal{N}_{\alpha}} = \frac{1}{2 \alpha} (\alpha \mathcal{I}-\mathcal{A}_{1}) (\alpha \mathcal{I}-\mathcal{A}_{2}),$$
then $\mathcal{A}=\tilde{\mathcal{M}_{\alpha}} - \tilde{\mathcal{N}_{\alpha}}$ and
$$\mathcal{T}_{\alpha}=\tilde{\mathcal{M}_{\alpha}}^{-1} \tilde{\mathcal{N}_{\alpha}}=\mathcal{I}-\tilde{\mathcal{M}_{\alpha}}^{-1} {\mathcal{A}}.$$
From now on, we use $\mathcal{M}_{\alpha}= (\alpha \mathcal{I}+\mathcal{A}_{1}) (\alpha \mathcal{I}+\mathcal{A}_{2})$ as the APSS preconditioner, since the pe-factor $\frac{1}{2\alpha}$ has no effect on the preconditioned matrix. So, the saddle point system \eqref{eq1} can be preconditioned  from the left as ${\mathcal{M}_{\alpha}}^{-1} \mathcal{A} {\bf x}= {\mathcal{M}_{\alpha}}^{-1} {\bf b} $. In this case, we have
\begin{equation}\label{eq2.5}
 {\mathcal{M}_{\alpha}}^{-1} \mathcal{A} {\bf x}=(\mathcal{I}-{\mathcal{M}_{\alpha}}^{-1} {\mathcal{N}_{\alpha}}) {\bf x}=  {\mathcal{M}_{\alpha}}^{-1} {\bf b}, 
\end{equation}
where 
\begin{equation}\label{Nalpha}
{\mathcal{N}_{\alpha}}=(\alpha \mathcal{I}-\mathcal{A}_{1}) (\alpha \mathcal{I}-\mathcal{A}_{2})=\alpha \mathcal{I} . \alpha \mathcal{I} -\alpha (\mathcal{A}_{1}+\mathcal{A}_{2})+\mathcal{A}_{1} \mathcal{A}_{2}.
\end{equation}
%Hereafter, we call the above scheme APSS iteration method.

\section{The semi-convergence  of APSS iteration method}\label{Sec3}

In this section, we will analyze the semi-convergence  properties of the APSS iteration method for solving the double saddle point problem \eqref{eq1}. First, let us give some related definitions, lemmas and theorems.

\begin{definition}
	The iteration method \eqref{eq3} is said to be  semiconvergent if for any initial guess ${\bf{x}}^{(0)}$, the iteration sequence $\{\bf{x}^{(k)}\}$ produced by \eqref{eq3} converges to a solution  of linear systems $\mathcal{A} \bf{x}=\bf{b}.$ 
\end{definition}
\begin{theorem}
	\cite{Semi} The iteration method  \eqref{eq3} is semi-convergent if and only if 
	 $$index (\mathcal{I}-\mathcal{T}_{\alpha})=1 \quad \text{and} \quad \vartheta(\mathcal{T}_{\alpha})<1,$$
	  where  $$\vartheta(\mathcal{T}_{\alpha}) \equiv \max \{|\lambda|: \lambda \in \sigma(\mathcal{T}_{\alpha}), \lambda \neq 1\}<1,$$
	in which  $\vartheta(\mathcal{T}_{\alpha})$  is called the pseudo-spectral radius of $\mathcal{T}_{\alpha}.$
	Moreover, the limit of the sequence can be written as
	$${\bf{x}}^{(*)}= (\mathcal{I}-\mathcal{T}_{\alpha})^{D} f+[\mathcal{I}-(\mathcal{I}-\mathcal{T}_{\alpha})^{D} (\mathcal{I}-\mathcal{T}_{\alpha})] {\bf{x}}^{(0)},$$
	where $(\mathcal{I}-{\mathcal{T}_{\alpha}})^{D}$ denotes the Drazin inverse  of $(\mathcal{I}-{\mathcal{T}_{\alpha}})$ \cite{Drazin}.
	\end{theorem}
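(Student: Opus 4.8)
The plan is to reduce the statement to the asymptotics of the matrix powers $\mathcal{T}_{\alpha}^{k}$ together with those of the associated partial sums $\sum_{j=0}^{k-1}\mathcal{T}_{\alpha}^{j}$, and to read both off from the Jordan structure of $\mathcal{T}_{\alpha}$ at the eigenvalue $1$. First I would unroll the recursion \eqref{eq3}: for an arbitrary initial guess ${\bf x}^{(0)}$,
\[
{\bf x}^{(k)}=\mathcal{T}_{\alpha}^{k}\,{\bf x}^{(0)}+\Big(\sum_{j=0}^{k-1}\mathcal{T}_{\alpha}^{j}\Big)f .
\]
Since the splitting matrix $\tilde{\mathcal{M}}_{\alpha}$ is nonsingular for $\alpha>0$ and, as recorded in Section~\ref{sec2}, $\mathcal{I}-\mathcal{T}_{\alpha}=\tilde{\mathcal{M}}_{\alpha}^{-1}\mathcal{A}$ and $f=\tilde{\mathcal{M}}_{\alpha}^{-1}{\bf b}$, we get $\ker(\mathcal{I}-\mathcal{T}_{\alpha})=\ker\mathcal{A}$ and $\mathrm{range}(\mathcal{I}-\mathcal{T}_{\alpha})=\tilde{\mathcal{M}}_{\alpha}^{-1}\,\mathrm{range}(\mathcal{A})$; in particular solvability of $\mathcal{A}{\bf x}={\bf b}$ is equivalent to $f\in\mathrm{range}(\mathcal{I}-\mathcal{T}_{\alpha})$, which I will assume (it is also forced a posteriori by convergence).

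Next I would establish the standard matrix-analysis fact that the sequence $\{\mathcal{T}_{\alpha}^{k}\}_{k\ge 0}$ converges if and only if $index(\mathcal{I}-\mathcal{T}_{\alpha})=1$ and $\vartheta(\mathcal{T}_{\alpha})<1$. Writing $\mathcal{T}_{\alpha}$ in Jordan canonical form, a Jordan block attached to an eigenvalue $\lambda$ produces a convergent power sequence exactly when $|\lambda|<1$, or $\lambda=1$ and the block has order one; a block with $|\lambda|=1$, $\lambda\neq1$, or a nontrivial block at $\lambda=1$, forces divergence. Now $\vartheta(\mathcal{T}_{\alpha})<1$ says precisely that every eigenvalue other than $1$ has modulus $<1$, while $index(\mathcal{I}-\mathcal{T}_{\alpha})=1$ says precisely that the eigenvalue $1$ of $\mathcal{T}_{\alpha}$ is semisimple; together they match the block-by-block criterion. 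Under these two hypotheses one moreover gets the direct sum $\mathbb{C}^{\mathbf{n}}=\ker(\mathcal{I}-\mathcal{T}_{\alpha})\oplus\mathrm{range}(\mathcal{I}-\mathcal{T}_{\alpha})$, and $\mathcal{T}_{\alpha}^{k}$ tends to the projector $\mathcal{P}$ onto $\ker(\mathcal{I}-\mathcal{T}_{\alpha})$ along $\mathrm{range}(\mathcal{I}-\mathcal{T}_{\alpha})$, which by the index-one (group-inverse) property of the Drazin inverse equals $\mathcal{I}-(\mathcal{I}-\mathcal{T}_{\alpha})(\mathcal{I}-\mathcal{T}_{\alpha})^{D}$.

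Then I would treat the particular part. Decompose $f=\mathcal{P}f+(\mathcal{I}-\mathcal{P})f$. Because $\mathcal{T}_{\alpha}(\mathcal{P}f)=\mathcal{P}f$, the piece $\sum_{j=0}^{k-1}\mathcal{T}_{\alpha}^{j}(\mathcal{P}f)=k\,\mathcal{P}f$ stays bounded only if $\mathcal{P}f=0$, i.e.\ $f\in\mathrm{range}(\mathcal{I}-\mathcal{T}_{\alpha})$ — exactly the solvability condition; on the complementary subspace $\mathcal{T}_{\alpha}$ restricts to a matrix of spectral radius $\vartheta(\mathcal{T}_{\alpha})<1$, so the Neumann-type series $\sum_{j\ge 0}\mathcal{T}_{\alpha}^{j}(\mathcal{I}-\mathcal{P})f$ converges and equals $(\mathcal{I}-\mathcal{T}_{\alpha})^{D}f$. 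Adding the two limits gives
\[
{\bf x}^{(*)}=(\mathcal{I}-\mathcal{T}_{\alpha})^{D}f+\big[\mathcal{I}-(\mathcal{I}-\mathcal{T}_{\alpha})^{D}(\mathcal{I}-\mathcal{T}_{\alpha})\big]{\bf x}^{(0)},
\]
using $(\mathcal{I}-\mathcal{T}_{\alpha})(\mathcal{I}-\mathcal{T}_{\alpha})^{D}=(\mathcal{I}-\mathcal{T}_{\alpha})^{D}(\mathcal{I}-\mathcal{T}_{\alpha})$, valid in index one. Finally ${\bf x}^{(*)}$, being the limit of \eqref{eq3}, is a fixed point, hence $(\mathcal{I}-\mathcal{T}_{\alpha}){\bf x}^{(*)}=f$, i.e.\ $\tilde{\mathcal{M}}_{\alpha}^{-1}\mathcal{A}{\bf x}^{(*)}=\tilde{\mathcal{M}}_{\alpha}^{-1}{\bf b}$, so $\mathcal{A}{\bf x}^{(*)}={\bf b}$ and ${\bf x}^{(*)}$ is a genuine solution; this proves the ``if'' direction and the limit formula. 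For ``only if'', if $index(\mathcal{I}-\mathcal{T}_{\alpha})>1$ or $\vartheta(\mathcal{T}_{\alpha})\ge 1$, a suitable ${\bf x}^{(0)}$ activates one of the divergence mechanisms above.

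I expect the bookkeeping in the middle step to be the main obstacle: cleanly matching the index-one condition on $\mathcal{I}-\mathcal{T}_{\alpha}$ with semisimplicity of the eigenvalue $1$ of $\mathcal{T}_{\alpha}$, producing the splitting $\mathbb{C}^{\mathbf{n}}=\ker\oplus\mathrm{range}$, and verifying the two Drazin-inverse identities $\lim_{k}\mathcal{T}_{\alpha}^{k}=\mathcal{I}-(\mathcal{I}-\mathcal{T}_{\alpha})(\mathcal{I}-\mathcal{T}_{\alpha})^{D}$ and $\sum_{j\ge 0}\mathcal{T}_{\alpha}^{j}f=(\mathcal{I}-\mathcal{T}_{\alpha})^{D}f$ on the range. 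These are classical, but each genuinely relies on the index-one hypothesis, without which the Drazin inverse is not a group inverse and the power sequence would not converge in the first place.
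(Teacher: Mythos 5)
This theorem is not proved in the paper at all: it is quoted verbatim from Berman and Plemmons \cite{Semi} as a known fact, so there is no in-paper argument to compare against. Your blind proof is, however, a correct and essentially complete reconstruction of the standard proof of that classical result: unrolling the iteration into $\mathcal{T}_{\alpha}^{k}{\bf x}^{(0)}+\sum_{j=0}^{k-1}\mathcal{T}_{\alpha}^{j}f$, characterizing convergence of the power sequence block-by-block in the Jordan form (which is exactly where $\mathrm{index}(\mathcal{I}-\mathcal{T}_{\alpha})=1$ encodes semisimplicity of the eigenvalue $1$ and $\vartheta(\mathcal{T}_{\alpha})<1$ handles the rest of the spectrum), splitting $\mathbb{C}^{\mathbf n}=\ker(\mathcal{I}-\mathcal{T}_{\alpha})\oplus\mathrm{range}(\mathcal{I}-\mathcal{T}_{\alpha})$, and identifying the two limits with the Drazin (group) inverse expressions. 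Two small points are worth tightening if you write this up. First, the consistency of $\mathcal{A}{\bf x}={\bf b}$ is genuinely a hypothesis hiding in the definition of semi-convergence (the limit must be \emph{a solution}); you correctly observe that $\mathcal{P}f=0$ is equivalent to it and that otherwise the particular sum grows linearly, so make that an explicit part of the equivalence rather than a parenthetical. Second, for the ``only if'' direction the cleanest route is the one you gesture at: if ${\bf x}^{(k)}$ converges for \emph{every} ${\bf x}^{(0)}$, then subtracting the iterates generated from two initial guesses shows $\mathcal{T}_{\alpha}^{k}v$ converges for every $v$, hence $\mathcal{T}_{\alpha}^{k}$ converges as a matrix sequence, which by your Jordan-block criterion forces both conditions; stating it this way avoids any worry about cancellation between the homogeneous and particular parts.
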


\begin{lemma}\label{lem1}
\cite{Marchuk} (Kellogg's lemma)
 If $A \in \mathbb{C} ^{n \times n}$ is positive semi-definite, then 
$$\left\|(\alpha I+A)^{-1}(\alpha I-A)\right\|_{2} \leq 1,$$
for all $\alpha >0$. Moreover, if $A \in \mathbb{C} ^{n \times n}$ is positive definite, then 
$$\left\|(\alpha I+A)^{-1}(\alpha I-A)\right\|_{2} <1,$$
for all $\alpha >0.$
\end{lemma}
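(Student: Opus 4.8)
I would treat Lemma \ref{lem1} as a self-contained fact and prove it by reducing the spectral-norm bound to a pointwise inequality. The plan is first to record that for $\alpha>0$ the matrix $\alpha I+A$ is invertible: since $A$ is positive semi-definite — i.e. $\mathrm{Re}\,(x^{*}Ax)=\tfrac12\,x^{*}(A+A^{*})x\ge 0$ for all $x$ — we get $\mathrm{Re}\,\big(x^{*}(\alpha I+A)x\big)\ge\alpha\|x\|_{2}^{2}>0$ whenever $x\neq0$, so $(\alpha I+A)x\neq0$ for $x\neq0$. Hence $(\alpha I+A)^{-1}(\alpha I-A)$ is well defined, and since $\alpha I-A$ and $(\alpha I+A)^{-1}$ are both functions of $A$ they commute, so this matrix also equals $(\alpha I-A)(\alpha I+A)^{-1}$.

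The heart of the argument is the elementary identity obtained by expanding squared norms: for every $x\in\mathbb{C}^{n}$,
$$\|(\alpha I+A)x\|_{2}^{2}-\|(\alpha I-A)x\|_{2}^{2}=4\alpha\,\mathrm{Re}\,(x^{*}Ax),$$
because the $\alpha^{2}\|x\|_{2}^{2}$ and $\|Ax\|_{2}^{2}$ contributions cancel and only the cross terms $\pm2\alpha\,\mathrm{Re}\,(x^{*}Ax)$ remain. Positive semi-definiteness of $A$ together with $\alpha>0$ makes the right-hand side nonnegative, so $\|(\alpha I-A)x\|_{2}\le\|(\alpha I+A)x\|_{2}$ for all $x$. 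Substituting $y=(\alpha I+A)x$, which by the invertibility step runs over all of $\mathbb{C}^{n}$, gives $\|(\alpha I-A)(\alpha I+A)^{-1}y\|_{2}\le\|y\|_{2}$, that is $\|(\alpha I+A)^{-1}(\alpha I-A)\|_{2}\le1$.

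For the positive definite case I would sharpen the last step. If $A+A^{*}$ is positive definite then $\mathrm{Re}\,(x^{*}Ax)>0$ for every $x\neq0$, so the displayed identity yields the strict inequality $\|(\alpha I-A)x\|_{2}<\|(\alpha I+A)x\|_{2}$ for all $x\neq0$. Passing to $y=(\alpha I+A)x$ and restricting to the unit sphere $\{\,\|y\|_{2}=1\,\}$, the continuous function $y\mapsto\|(\alpha I-A)(\alpha I+A)^{-1}y\|_{2}$ is everywhere strictly less than $1$ on a compact set, hence attains a maximum that is still $<1$; this maximum is precisely $\|(\alpha I+A)^{-1}(\alpha I-A)\|_{2}$, giving the strict bound.

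The proof is short and the only places needing care are (i) using the field-of-values meaning of ``positive semi-definite'' (so that $\alpha I+A$ is genuinely invertible even though $A$ need not be Hermitian), which I would flag explicitly at the outset, and (ii) the compactness argument that upgrades a strict pointwise inequality to a strict norm inequality in the definite case. Everything else is the one-line norm expansion above.
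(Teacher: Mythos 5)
Your proof is correct and complete. Note, however, that the paper does not prove this lemma at all: it is stated as a known result and attributed to Marchuk's book \cite{Marchuk}, so there is no in-paper argument to compare against. Your self-contained derivation is the standard one — the invertibility of $\alpha I+A$ via $\mathrm{Re}\,\bigl(x^{*}(\alpha I+A)x\bigr)\ge\alpha\|x\|_{2}^{2}$, the polarization identity $\|(\alpha I+A)x\|_{2}^{2}-\|(\alpha I-A)x\|_{2}^{2}=4\alpha\,\mathrm{Re}\,(x^{*}Ax)$, the substitution $y=(\alpha I+A)x$, and the commutation $(\alpha I-A)(\alpha I+A)^{-1}=(\alpha I+A)^{-1}(\alpha I-A)$ — and all steps check out, including the two points you flag: the field-of-values reading of positive semi-definiteness (essential here, since the paper applies the lemma to the non-Hermitian matrices $\mathcal{A}_{1}$ and $\mathcal{A}_{2}$) and the compactness argument needed to pass from the strict pointwise inequality to the strict operator-norm inequality in the definite case. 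Supplying this proof explicitly is a reasonable thing to do, and it adds rigor the paper delegates to the citation.
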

\begin{remark}
	For the saddle point matrix of the form 
	\begin{equation*}
	\mathcal{A}=\left(\begin{array}{ccc}
	{A} & {B^{T}} & {0} \\
	{-B} & {0} & {- C^{T}} \\
	{0} & {C} & {0}
	\end{array}\right),
	\end{equation*}
	if $\mathcal{A}$ is singular, we can easily see that at least one of the sets $null(B^{T}) \cap null(C)$ and $ null(C^{T})$ is nontrivial, i.e., the dimension of one of the two sets is at least one.
\end{remark}
Noticing that 
\begin{eqnarray}
(\alpha \mathcal{I} - \mathcal{A}_{1}) (\alpha \mathcal{I} + \mathcal{A}_{1})^{-1} &=& (\alpha \mathcal{I} + \mathcal{A}_{1})^{-1} (\alpha \mathcal{I} - \mathcal{A}_{1})  \label{Eqq1}\\
(\alpha \mathcal{I} - \mathcal{A}_{2}) (\alpha \mathcal{I} + \mathcal{A}_{2})^{-1} &=& (\alpha \mathcal{I} + \mathcal{A}_{2})^{-1} (\alpha \mathcal{I} - \mathcal{A}_{2}),	\label{Eqq2}
\end{eqnarray}
 it is easy to see that the matrix $\mathcal{T}_{\alpha}$ is similar to 
\begin{equation}\label{eq4}
\mathcal{L_{\alpha}}=(\alpha \mathcal{I} + \mathcal{A}_{1})^{-1} (\alpha \mathcal{I} - \mathcal{A}_{1}) (\alpha \mathcal{I} + \mathcal{A}_{2})^{-1} (\alpha \mathcal{I} - \mathcal{A}_{2}).
\end{equation}
Now, since  the matrices $\mathcal{A}_{1}$ and $\mathcal{A}_{2}$ are both positive semi-definite, then using the Kellogg's lemma we have
\[
	\|\mathcal{L_{\alpha}}\|_2 \leq \|(\alpha \mathcal{I} + \mathcal{A}_{1})^{-1} (\alpha \mathcal{I} - \mathcal{A}_{1})\|_2  \|(\alpha \mathcal{I} + \mathcal{A}_{2})^{-1} (\alpha \mathcal{I} - \mathcal{A}_{2})\|_2 \leq 1.
\]
Thus, it holds that $\mathcal{L}_{\alpha} \mathbf{x} = \mathbf{x}$ if and only if  ${}\mathcal{L}_{\alpha}^{*} \mathbf{x} = \mathbf{x},$ for any  $\mathbf{x} \in \mathbb{C}^{n \times n}$ \cite{Bai1}. As a result the index of $\text{index}(\mathcal{I} - \mathcal{L}_{\alpha})=1 $. Eventually, since two similar matrices have the same index, we see that  
\begin{equation}\label{indp}
\text{index}(\mathcal{I} - \mathcal{T}_{\alpha}) = 1.
\end{equation}
%\begin{lemma}\label{lem22}
%	Consider the double saddle point matrix $\mathcal{A}$ in \eqref{eq1}. Suppose that $\mathcal{A}=\mathcal{A}_{1}+\mathcal{A}_{2}$ defined in \eqref{eq111},
%in which  $A \in \mathbb{R}^{n\times n}$ is symmetric positive definite, $B \in \mathbb{R}^{m \times n}$ is rank-deficient and $C \in \mathbb{R}^{l \times m}$ is of full row rank. Then, for any $x \in null(\mathcal{A}_{1}) \cap null(\mathcal{A}_{2}),$ the matrix $\mathcal{L}_{\alpha}$ defined in \eqref{eq4} satisfies $\mathcal{L}_{\alpha} x = x.$
%\end{lemma}
%\begin{proof}
%	From $x \in null(\mathcal{A}_{1}),$ we have $(\alpha \mathcal{I} + \mathcal{A}_{1}) x = (\alpha \mathcal{I} - \mathcal{A}_{1}) x.$ Since $\mathcal{A}_{1}$ is positive semi-definite, then $(\alpha \mathcal{I}+\mathcal{A}_{1})$ is invertible. Thus, 
%	\begin{equation}\label{eq5}
%	(\alpha \mathcal{I}+\mathcal{A}_{1})^{-1}  (\alpha \mathcal{I}-\mathcal{A}_{1}) x = x.
%	\end{equation}
%	Similarly, using positive semi-definiteness of $\mathcal{A}_{2}$ and the condition $x \in null(\mathcal{A}_{2}),$ it concludes that 
%	\begin{equation}\label{eq6}
%	(\alpha \mathcal{I}+\mathcal{A}_{2})^{-1} (\alpha \mathcal{I}-\mathcal{A}_{2}) x = x.
%	\end{equation}
%	Now, by combining \eqref{eq5} and \eqref{eq6} , we deduce that $\mathcal{L}_{\alpha} x = x.$
%\end{proof}

Next, we give the conditions for $\vartheta(\mathcal{T}_{\alpha}) < 1.$

%\begin{lemma}\label{lem2}
%Suppose that $A \in \mathbb{R}^{n \times n}$ is symmetric positive definite, $B \in \mathbb{R}^{m \times n}$ is rectungular and $C \in \mathbb{R}^{l \times m}$ is rank-deficient. For the decomposition \eqref{Split} of the matrix $\mathcal{A}$, 
%$$null(\mathcal{A}_{1}) \cap null(\mathcal{A}_{2}) \subseteq null(\mathcal A).$$
%		\begin{proof}
%		 It is easy to prove, so the proof  is omitted.  
%%			%If $x \in null(\mathcal{A}_{1}) \cap null(\mathcal{A}_{2}),$ then $\mathcal{A}x=\mathcal{A}_{1} x+\mathcal{A}_{2} x=0.$ Obviously, we have $x \in null \mathcal({A}).$
%		\end{proof}
%\end{lemma}
%Under the conditions of Lemma \ref{lem2}, it is easy to conclude the following fact. That is to say, if $x \in null(\mathcal{A})$, then $\mathcal{A}_{1} x= - \mathcal{A}_{2} x.$

%	\begin{lemma}
%		Under the assumptions of Lemma \ref{lem2}, for any $x \in null(A),$ matrix $\mathcal{L}_{\alpha}$ defined in \eqref{eq4} satisfies $\mathcal{L}_{\alpha} x = x.$
%		\begin{proof}
%			First, we suppose that $\lambda=1.$ 
%		\end{proof}
%	\end{lemma}
\begin{theorem}\label{th2}
	Suppose that $A \in \mathbb{R}^{n \times n}$ is symmetric positive definite, $B \in \mathbb{R}^{m \times n}$ is an arbitrary matrix and $C \in \mathbb{R}^{l \times m}$ is rank-deficient. Then, for the decomposition \eqref{Split} of the matrix $\mathcal{A}$, 
	the pseudo-spectral radius of $\mathcal{T}_{\alpha}$ is less than one,  i.e., $\vartheta(\mathcal{T}_{\alpha})<1, \forall \alpha>0.$

%	 eigenpair $(\lambda, x)$ of matrix $\mathcal{L}_{\alpha}$
%	defined in \eqref{eq4} satisfies the following facts:
%	
%	$(a)$ $x \in null(\mathcal{A})$ if and only if $\lambda=1;$
%	
%	$(b)$ if $x \notin null(A),$ then
%
%        $(b_{1})$ $|\lambda|<1$ for $x \in null(\mathcal{A}_{2})$ but $x \notin null(\mathcal{A}_{1});$
%	
%	 $(b_{2})$ $|\lambda|<1$ for $x \in null(\mathcal{A}_{1})$ but $x \notin null(\mathcal{A}_{2});$
%	 
%	  $(b_{3})$ $|\lambda|<1$ for $x \notin null(\mathcal{A}_{1})$ but $x \notin null(\mathcal{A}_{2}).$
%	  
%	\noindent That is, under $(a)$ and $(b_{i}) (i = 1, 2, 3)$ of $(b)$ conditions, for any initial vector $x^{(0)} \in  \mathbb{C}^{n}$ and
%	positive constant $\alpha$, the APSS iteration method is semi-convergent to a solution of the singular
%	system of linear equations \eqref{eq1}.
\end{theorem}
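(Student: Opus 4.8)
The plan is to carry the analysis over to the matrix $\mathcal{L}_{\alpha}$ of \eqref{eq4}: it is similar to $\mathcal{T}_{\alpha}$, hence has the same spectrum, and since \eqref{indp} already gives $\mathrm{index}(\mathcal{I}-\mathcal{T}_{\alpha})=1$, the only thing left is the bound on the pseudo-spectral radius. Write $\mathcal{L}_{\alpha}=\mathcal{Q}_{1}\mathcal{Q}_{2}$ with $\mathcal{Q}_{j}=(\alpha\mathcal{I}+\mathcal{A}_{j})^{-1}(\alpha\mathcal{I}-\mathcal{A}_{j})$. Since $\mathcal{A}_{1},\mathcal{A}_{2}$ are positive semi-definite, Kellogg's Lemma (Lemma \ref{lem1}) gives $\|\mathcal{Q}_{1}\|_{2}\le 1$, $\|\mathcal{Q}_{2}\|_{2}\le 1$, so $\|\mathcal{L}_{\alpha}\|_{2}\le 1$ and every eigenvalue $\lambda$ of $\mathcal{T}_{\alpha}$ satisfies $|\lambda|\le 1$; the whole problem therefore reduces to proving that $|\lambda|=1$ implies $\lambda=1$. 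I would record at the outset that $\mathcal{A}_{2}^{T}=-\mathcal{A}_{2}$, so $\mathcal{Q}_{2}$ is the Cayley transform of a skew-symmetric matrix (in particular orthogonal); this is why only the factor $\mathcal{Q}_{1}$ will supply constraints.

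Take an eigenpair $\mathcal{L}_{\alpha}u=\lambda u$, $u\neq0$, $|\lambda|=1$, and squeeze $\|u\|=|\lambda|\,\|u\|=\|\mathcal{Q}_{1}\mathcal{Q}_{2}u\|\le\|\mathcal{Q}_{2}u\|\le\|u\|$. Equality forces $\|\mathcal{Q}_{1}w\|=\|w\|$ with $w:=\mathcal{Q}_{2}u$. Setting $v:=(\alpha\mathcal{I}+\mathcal{A}_{1})^{-1}w$ and using the identity $\|w\|^{2}-\|\mathcal{Q}_{1}w\|^{2}=2\alpha\,v^{*}(\mathcal{A}_{1}+\mathcal{A}_{1}^{T})v$ together with the fact that $\mathcal{A}_{1}+\mathcal{A}_{1}^{T}$ is block diagonal with blocks $2A,\,0_{m},\,0_{l}$ and $A$ is SPD, I get that the length-$n$ block of $v$ vanishes, say $v=(0;v_{2};v_{3})$. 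Multiplying out then gives the explicit shapes $w=(B^{T}v_{2};\,\alpha v_{2};\,\alpha v_{3})$ and $\mathcal{Q}_{1}w=(-B^{T}v_{2};\,\alpha v_{2};\,\alpha v_{3})$.

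Reinsert this into the eigen-relation: from $\mathcal{Q}_{1}\mathcal{Q}_{2}u=\lambda u$ and $w=\mathcal{Q}_{2}u$ one gets $\mathcal{Q}_{2}\mathcal{Q}_{1}w=\lambda w$, i.e., $(\alpha\mathcal{I}-\mathcal{A}_{2})\mathcal{Q}_{1}w=\lambda(\alpha\mathcal{I}+\mathcal{A}_{2})w$, whose three block rows read $(\lambda+1)B^{T}v_{2}=0$, $(\lambda+1)C^{T}v_{3}=(\lambda-1)\alpha v_{2}$, $(\lambda+1)Cv_{2}=(1-\lambda)\alpha v_{3}$. The case $\lambda=-1$ is immediate: it forces $v_{2}=v_{3}=0$, hence $w=0$ and $u=\mathcal{Q}_{2}^{-1}w=0$, a contradiction. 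So suppose $|\lambda|=1$, $\lambda\neq\pm1$, and put $\mu:=(\lambda-1)/(\lambda+1)$, which is then nonzero and purely imaginary. The last two relations become $C^{T}v_{3}=\mu\alpha v_{2}$ and $Cv_{2}=-\mu\alpha v_{3}$, so $C^{T}Cv_{2}=-\mu^{2}\alpha^{2}v_{2}$ with $-\mu^{2}>0$: thus $v_{2}$ would be an eigenvector of $C^{T}C$ for a strictly positive eigenvalue (and $v_{3}$ one of $CC^{T}$).

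I expect the crux — and the only nontrivial step — to be ruling this last situation out, i.e., forcing $v_{2}=0$. We have $B^{T}v_{2}=0$, so $v_{2}\in null(B^{T})$, while $C^{T}Cv_{2}\neq0$ forces $v_{2}\notin null(C)$; closing the argument thus hinges on the hypotheses through the interplay of $null(B^{T})$, $null(C)$ and the rank deficiency of $C$. If $B$ has full row rank then $B^{T}$ is injective and $v_{2}=0$ is immediate, after which $C^{T}v_{3}=0$ and $0=Cv_{2}=-\mu\alpha v_{3}$ give $v_{3}=0$, so $w=0$, $u=0$ — the desired contradiction; for a general $B$ this implication must be argued more carefully. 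Once $v_{2}=0$ (hence $v_{3}=0$) is secured, the only remaining possibility is $\mu=0$, i.e., $\lambda=1$, and a quick check that this case is consistent finishes the proof that $\vartheta(\mathcal{T}_{\alpha})<1$. Everything except this closing step is the routine block computation sketched above.
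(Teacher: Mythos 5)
Your argument is correct and, compared with the paper, more economical: the paper proves Theorem \ref{th2} by splitting into four cases according to whether the eigenvector lies in $null(\mathcal{A}_1)$ and/or $null(\mathcal{A}_2)$, whereas you run a single equality-case analysis of Kellogg's bound (orthogonality of $\mathcal{Q}_2$, then $\|\mathcal{Q}_1w\|_2=\|w\|_2$ forcing the first block of $(\alpha\mathcal{I}+\mathcal{A}_1)^{-1}w$ to vanish, then the three block equations). The computations you sketch are exactly the ones the paper performs inside its Cases 2--4, so the two proofs have the same mathematical content; yours packages those cases into one argument and absorbs Case 1 ($\lambda=1$) as the surviving possibility. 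The Cayley-transform/orthogonality observation for $\mathcal{Q}_2$ is the same device the paper uses in its Case 4.

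The step you flag as open --- deducing $v_2=0$ from $(\lambda+1)B^Tv_2=0$ when $B$ is genuinely arbitrary --- is a real gap, but it is a gap in the theorem statement rather than in your proof. The paper's own proof invokes ``$B$ is a full row rank matrix'' in Cases 2, 3 and 4, in direct conflict with the stated hypothesis ``$B$ is an arbitrary matrix'', and full row rank (equivalently, injectivity of $B^T$) is genuinely needed: your own block equations produce a counterexample otherwise. Take $B=0$ and let $v_2$ be an eigenvector of $C^TC$ with eigenvalue $\sigma^2>0$; set $\mu=\imath\sigma/\alpha$, $v_3=-(\mu\alpha)^{-1}Cv_2$ and $w=(0;\alpha v_2;\alpha v_3)$. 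Then $\mathcal{A}_2w=-\mu\alpha w$ and $\mathcal{Q}_1w=w$, so $\mathcal{L}_\alpha w=\frac{1+\mu}{1-\mu}\,w$ with $\bigl|\frac{1+\mu}{1-\mu}\bigr|=1$ and $\frac{1+\mu}{1-\mu}\neq 1$, whence $\vartheta(\mathcal{T}_\alpha)=1$ for every $\alpha>0$. So you should simply add the hypothesis $\mathrm{rank}(B)=m$ (as the paper implicitly does), after which your chain $v_2=0\Rightarrow v_3=0\Rightarrow w=0\Rightarrow u=0$ closes the proof. Two small remarks: the rank deficiency of $C$ plays no role in this step (it only makes $\mathcal{A}$ singular, which is why semi-convergence rather than convergence is at stake), and the final ``quick check that $\lambda=1$ is consistent'' is unnecessary --- to get $\vartheta(\mathcal{T}_\alpha)<1$ you only need to exclude unimodular eigenvalues other than $1$.
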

\begin{proof}
First of all note that the matrix $\alpha \mathcal{I}-\mathcal{A}_{2}$ is non-singular, and the equalities \ref{Eqq1} and \ref{Eqq2} hold. Hence, the matrix $\mathcal{T}_{\alpha}$ is similar to
 $$\mathcal{P}_{\alpha}=(\alpha \mathcal{I} + \mathcal{A}_{2})^{-1}(\alpha \mathcal{I} - \mathcal{A}_{2}) (\alpha \mathcal{I} + \mathcal{A}_{1})^{-1} (\alpha \mathcal{I} - \mathcal{A}_{1}),$$
 So, $\vartheta(\mathcal{T}_{\alpha})=\vartheta(\mathcal{P}_{\alpha}).$
	Let $x \in \mathbb{C}^{n}$  be any eigenvector of the matrix $\mathcal{P}_{\alpha}$ and $\lambda$ be the eigenvalue
	of matrix $\mathcal{P}_{\alpha}$ corresponding to eigenvector $x$, i.e., $\mathcal{P}_{\alpha} x = \lambda x.$  Without loss of generality, we assume that $\|x\|_{2}=1.$ 	Now, we prove the theorem according to the following four cases:
	
	\bigskip
	
	\noindent \textbf{\textit{Case 1.}}  $x \in null(\mathcal{A}_{1}) \cap null(\mathcal{A}_{2}).$ So, we have $\mathcal{A}_{1} x=\mathcal{A}_{2}x=0$ and it follows that 
%\begin{align*}
%\mathcal{P_{\alpha}}x&=(\alpha \mathcal{I} + \mathcal{A}_{2})^{-1} (\alpha \mathcal{I} - \mathcal{A}_{2}) (\alpha \mathcal{I} + \mathcal{A}_{1})^{-1} (\alpha \mathcal{I} - \mathcal{A}_{1})x \\ &=(\alpha \mathcal{I} + \mathcal{A}_{2})^{-1} (\alpha \mathcal{I} - \mathcal{A}_{2}) (\alpha \mathcal{I} + \mathcal{A}_{1})^{-1} (\alpha \mathcal{I} + \mathcal{A}_{1})x \\ &= (\alpha \mathcal{I} + \mathcal{A}_{2})^{-1} (\alpha \mathcal{I} - \mathcal{A}_{2}) x \\ &=(\alpha \mathcal{I} + \mathcal{A}_{2})^{-1} (\alpha \mathcal{I} + \mathcal{A}_{2}) x \\ &=
%x.
%\end{align*}
\begin{equation}
(\alpha \mathcal{I}+ \mathcal{A}_{1})x=(\alpha \mathcal{I}-\mathcal{A}_{1})x\Longrightarrow  
x=(\alpha \mathcal{I}+ \mathcal{A}_{1})^{-1}(\alpha \mathcal{I}-\mathcal{A}_{1})x, \label{t1}
\end{equation}
and
\begin{equation}
(\alpha \mathcal{I}+ \mathcal{A}_{2})x=(\alpha \mathcal{I}-\mathcal{A}_{2})x \Longrightarrow 
 x=(\alpha \mathcal{I}+ \mathcal{A}_{2})^{-1}(\alpha \mathcal{I}-\mathcal{A}_{2})x.\label{t2}
\end{equation}
By combining \eqref{t1} and \eqref{t2}, we have $\mathcal{P_{\alpha}}x=x.$ Thus, $\lambda=1.$ \\

%Conversely, if $\lambda=1,$  $\mathcal{T}_{\alpha} x = x$ can be rewritten as follows: 
%	$$(\alpha \mathcal{I}-\mathcal{A}_{1}) (\alpha \mathcal{I}-\mathcal{A}_{2}) x=(\alpha \mathcal{I}+\mathcal{A}_{1}) (\alpha \mathcal{I}+\mathcal{A}_{2}) x,$$
%	or equivalently,
%	$$(\alpha ^{2} \mathcal{I} - \alpha \mathcal{A}_{2} - \alpha \mathcal{A}_{1} + \mathcal{A}_{1} \mathcal{A}_{2}) x = (\alpha ^{2} \mathcal{I} + \alpha \mathcal{A}_{2} + \alpha \mathcal{A}_{1} + \mathcal{A}_{1} \mathcal{A}_{2}) x.$$
%	It then follows that $\mathcal{A} x = 0.$ So $x \in null(A).$ Consquently,  the necessary and sufficient condition for $\mathcal{T}_{\alpha}x=x,$ is $x \in  null(A).$
%	
%	
%	We prove this theorem by Lemma \ref{lem22}. First, we prove the necessary condition. We know that $\mathcal{L}_{\alpha}$ and $\mathcal{T}_{\alpha}$ are similar and, so, $\vartheta(\mathcal{L}_{\alpha}) = \vartheta(\mathcal{T}_{\alpha}).$ Let $\lambda$ be an arbitrary eigenvalue of $\mathcal{L}_{\alpha},$ then there exists a nonzero vector $x$ such that $\mathcal{L}_{\alpha} x = \lambda x.$ Without loss of generality, we assume that $\left\|x\right\|_{2}=1.$ In the following, we complete the proof by consider all the possible cases.
%	
%	\noindent Case(i): $x \in null(\mathcal{A}_{1}).$ It follows that $\mathcal{A}_{1} x = 0$ and  $\mathcal{A}_{2} x = 0$ (since $null(\mathcal{A}_{1}) \subseteq null(\mathcal{A}_{2})$ ). By easy manipulations, we have $\mathcal{L}_{\alpha} x = x.$ Thus, $\lambda=1.$
	
\noindent \textbf{\textit{Case 2.}}  $x \in null(\mathcal{A}_{2}),$ but $x \notin null(\mathcal{A}_{1}).$ It means that $\mathcal{A}_{1} x \neq 0$ and $\mathcal{A}_{2} x=0.$ From $\mathcal{P}_{\alpha} x = \lambda x,$ by easy computations we can obtain
\begin{equation}\label{eq7}
\lambda x=  (\alpha \mathcal{I}+\mathcal{A}_{1})^{-1} (\alpha \mathcal{I}-\mathcal{A}_{1}) x  .
\end{equation}  
Since $\mathcal{A}_{1}$ is positive semi-definite, it follows from the Kellogg's
lemma that
$$\|(\alpha \mathcal{I}+\mathcal{A}_{1})^{-1}(\alpha \mathcal{I}- \mathcal{A}_{1}\|_{2} \leq 1.$$ Therefore $|\lambda| \leq 1.$ In the following, we further prove that $|\lambda| < 1$ for any $\alpha >0.$ We will argue it by contradiction. If $|\lambda| = 1 ,$ then there  exists $ \theta \in(-\pi,\pi]$  such that 
\begin{equation}\label{eq70}
 (\alpha \mathcal{I}-\mathcal{A}_{1}) (\alpha \mathcal{I}+\mathcal{A}_{1})^{-1} x=e^{\imath \theta} x.
\end{equation}
If we set $\mathcal{V}_{\alpha}= (\alpha \mathcal{I}-\mathcal{A}_{1}) (\alpha \mathcal{I}+\mathcal{A}_{1})^{-1},$ then we can rewritten \eqref{eq70} as follows 
$$\mathcal{V}_{\alpha} x=e^{\imath \theta} x.$$
 It then follows that  
\begin{equation}\label{eq8}
\|\mathcal{V}_{\alpha} x\|_{2}=\|x\|_{2}.
\end{equation}
Now,  letting
$$w:=(u;v;p)=(\alpha \mathcal{I}+\mathcal{A}_{1})^{-1} x, \quad \text { with } \quad u \in \mathbb{C}^{n}, \quad   v \in \mathbb{C}^{m}, 
\text { and } \quad p \in \mathbb{C}^{l},$$ 
we get
\begin{equation}\label{eq71}
\|(\alpha \mathcal{I}-\mathcal{A}_{1}) w\|_{2}=\|(\alpha \mathcal{I}+\mathcal{A}_{1}) w\|_{2}.
\end{equation}
From \eqref{eq71}, it holds that
$$w^{*} (\mathcal{A}_{1}+\mathcal{A}_{1}^{*}) w = 0,$$
or equivalently,
\begin{equation*}
\left(\begin{array}{ccc}
u^{*} & v ^{*} &  p^{*}
\end{array}\right) \left(\begin{array}{ccc}
{A} & {0} & {0} \\
{0} & {0} & {0} \\
{0} & {0} & {0}
\end{array}\right) \left(\begin{array}{l}
u \\
v \\
p
\end{array}\right)=0,
\end{equation*}
which leads $u=0$, due to the symmetric positive definiteness of $A.$ On one hand, from \eqref{eq70} we have 
\begin{equation*}
(\alpha \mathcal{I}-\mathcal{A}_{1}) w=e^{\imath \theta} (\alpha \mathcal{I}+\mathcal{A}_{1}) w,
\end{equation*}
which gives
\begin{equation*}
\left(\begin{array}{ccc}
\alpha I-A & -B^{T} & {0} \\
B & \alpha I & {0} \\
{0} & {0} & \alpha I \\
\end{array}\right)\left(\begin{array}{c}
0 \\
v \\
p
\end{array}\right)=e^{\imath \theta} \left(\begin{array}{ccc}
\alpha I+A & B^{T} & {0} \\
-B & \alpha I & {0} \\
{0} & {0} & \alpha I \\
\end{array}\right)\left(\begin{array}{l}
0 \\
v \\
p
\end{array}\right),
\end{equation*}
and so 
\begin{equation*}
\left(\begin{array}{c}
-B^{T} v \\
\alpha v \\
\alpha p
\end{array}\right)=e^{\imath \theta} \left(\begin{array}{c}
B^{T} v \\
\alpha v \\
\alpha p
\end{array}\right).
\end{equation*}
It leads to
\begin{numcases}{}
B^{T} v = - e^{\imath \theta} B^{T} v, \label{eq73} \\
\alpha v = e^{\imath \theta } \alpha v , \label{eq74} \\ 
 \alpha p = e^{\imath \theta }  \alpha p. \label{eq75}
\end{numcases}
Here, if $B^{T} v = 0,$ then we  see that $v = 0$, because $B$ is a full row rank matrix. It is immediate to conclude that $\mathcal{A}_{1} x = 0,$ which is a contradiction with the assumption $x \notin null(\mathcal{A}_{1}).$ Thereby, from  \eqref{eq73} it holds $e^{\imath \theta}=-1.$ Substituting the equality $e^{\imath \theta}=-1$ into \eqref{eq74} and \eqref{eq75}, results in $v=0$ and  $p=0,$ respectively. So $w=0.$ Consequently, we have $x=(\alpha \mathcal{I}+\mathcal{A}_{1})w=0,$ which contradicts
with the fact that $x$ is an eigenvector. Therefore, $|\lambda| < 1 .$ 

\bigskip

\noindent \textbf{\textit{Case 3.}}  $x \in null(\mathcal{A}_{1}),$ but $x \notin null(\mathcal{A}_{2}).$ 
So, we have 
\[
\mathcal{A}_{1}x=0 \Longrightarrow (\alpha \mathcal{I}+\mathcal{A}_{1})x=\alpha x  \Longrightarrow 
(\alpha \mathcal{I}+\mathcal{A}_{1})^{-1}x=\frac{1}{\alpha} x.
\] 
Hence,  using  $\mathcal{A}_{1}x=0 $ and the above equation, we get 
\begin{eqnarray*}
\mathcal{P}_{\alpha} x &=& \alpha (\alpha \mathcal{I} + \mathcal{A}_{2})^{-1}(\alpha \mathcal{I} - \mathcal{A}_{2}) (\alpha \mathcal{I} + \mathcal{A}_{1})^{-1} x\\
                                        &=& (\alpha \mathcal{I}+\mathcal{A}_{2})^{-1} (\alpha \mathcal{I}-\mathcal{A}_{2})x.
\end{eqnarray*}
Therefore, using the fact that $\mathcal{P}_{\alpha}x=\lambda x$, positive semi-definiteness of $\mathcal{A}_{2}$ and the Kellogg's lemma, we deduce that
%\begin{equation*}
%\lambda=x^{*} (\alpha \mathcal{I}+\mathcal{A}_{2})^{-1} (\alpha \mathcal{I}-\mathcal{A}_{2})x.
%\end{equation*}
%Then,
\begin{equation}\label{eq76}
|\lambda| \leq  \| \mathcal{P}_{\alpha}x\| \leq \| (\alpha \mathcal{I}+\mathcal{A}_{2})^{-1} (\alpha \mathcal{I}-\mathcal{A}_{2})\|_{2}\leq 1.
\end{equation}
 Moreover, we claim that $|\lambda| =1$ never happens. By contradiction we assume that $|\lambda|=1.$ So, there exists $\theta \in(-\pi,\pi]$ so that
\begin{equation*}
(\alpha \mathcal{I}-\mathcal{A}_{2})  x=e^{\imath \theta} (\alpha \mathcal{I}+\mathcal{A}_{2})x,
\end{equation*}
which is equivalent to
\begin{numcases}{}
\alpha x_{1} = e^{\imath \theta} \alpha x_{1}, \label{eq3a} \\
\alpha x_{2} +C^{T} x_{3}= e^{\imath \theta } (\alpha x_{2} -C^{T} x_{3}) , \label{eq3b} \\ 
-C x_{2}+\alpha x_{3} = e^{\imath \theta }  (C x_{2}+\alpha x_{3}). \label{eq3c}
\end{numcases}
 From Eq. \eqref{eq3a}, either $e^{\imath \theta}=1$ or $x_{1}=0.$ First, suppose that $e^{\imath \theta}=1.$ From $\mathcal{A}_{1}x=0,$ we have
\begin{numcases}{}
A x_{1}+B^{T}x_{2}=0, \label{n1}\\
Bx_{2}=0. \label{n2}
\end{numcases}
 Substituting  $x_{1}=-A^{-1} B^{T} x_{2}$ which is deduced from \eqref{n1} into \eqref{n2}, gives
 $B A^{-1} B^{T} x_{2}=0,$ consequently, $x_{2}=0,$ and then by \eqref{n1}, $x_{1}=0.$
 Now, from Eq. \eqref{eq3b}, we have $C^{T} x_{3}=0.$ Thus, $\mathcal{A}_{2} x=0$ which is in contradiction with the assumption. 
  %Finally, Eq. \eqref{eq3c} concludes $x_{3}=0,$ and so $x=o,$ and it is impossible.
   Therefore, $e^{\imath \theta}=1$ never happens. Now, we discuss $x_{1}=0.$  Clearly, the assumption $\mathcal{A}_{1} x =0,$ gives 
\begin{equation*}
 \left(\begin{array}{ccc}
{A} & {B^{T}} & {0} \\
{-B} & {0} & {0} \\
{0} & {0} & {0}
\end{array}\right) \left(\begin{array}{l}
0 \\
x_{2} \\
x_{3}
\end{array}\right)=\left(\begin{array}{l}
B^{T} x_{2} \\
0 \\
0
\end{array}\right)=0.
\end{equation*}
Since $B$ has full row rank, we deduce that $x_{2}=0.$  Substituting $x_{2}=0$ into \eqref{eq3c} gives $x_{3}=0.$ So, $x=(x_{1};x_{2};x_{3})=0 ,$ which is impossible. thereupon, $|\lambda|=1$ is unacceptable.
%Let $\mathcal{A}_{2} = U \varSigma U^{*}$ be the spectral decomposition of the matrix $\mathcal{A}_{2},$ where $U  \in \mathbb{C}^{\mathbf{n} \times \mathbf{n}} $ is unitary matrix, and 
%\begin{equation*}
%\varSigma=\left(\begin{array}{cc}
%{D} & {0}  \\
%{0} & {0}
%\end{array}\right), D=diag(\sigma_{1},\sigma_{2},\ldots,\sigma_{n})\in \mathbb{R}^{\mathbf{n} \times \mathbf{n}} , \sigma_{i} \neq 0.
%\end{equation*}
%Now, we decompose $\hat{x}=U^{*} x$ into $\hat{x}=(p_{1};p_{2}).$ Then, by $\mathcal{A}_{2} x \neq 0$ we can obtain $D p_{1} \neq 0.$ 
%Based on \eqref{eq76},  we have
%
%\begin{align*}
%|\lambda| & \leq \left\|U\left(\begin{array}{cc}
%(\alpha I+D)^{-1}(\alpha I-D) & 0\\
%0 & I
%\end{array}\right) U^{*} x\right\|_{2} \\
%&=\left(\left\|(\alpha I+D)^{-1}(\alpha I-D) p_{1}\right\|_{2}^{2}+\|p_{2}\|_{2}^{2}\right)^{1 / 2} \\
%&<\left(\|p_{1}\|_{2}^{2}+\|P_{2}\|_{2}^{2}\right)^{1 / 2} \\
%&=\|x\|_{2} \\
%&=1  
%\end{align*}
\bigskip

\noindent \textbf{\textit{Case 4.}} 
  $x \notin null(\mathcal{A}_{1}),x \notin null(\mathcal{A}_{2}).$  For this case, we have $\mathcal{A}_{1} x \neq 0$ and $\mathcal{A}_{2} x \neq 0.$ 
If we define $\mathcal{Q}_{\alpha}= (\alpha \mathcal{I} + \mathcal{A}_{2} )^{-1} (\alpha \mathcal{I} - \mathcal{A}_{2} )$ and $\mathcal{V}_{\alpha}= (\alpha \mathcal{I}+ \mathcal{A}_{1} )^{-1} (\alpha \mathcal{I} - \mathcal{A}_{1} ),$ we can see that $\mathcal{P}_{\alpha}= \mathcal{Q}_{\alpha} \mathcal{V}_{\alpha}.$ Since $\mathcal{Q}_{\alpha}$ is a unitary matrix, we see that   $\left\|\mathcal{Q}_{\alpha} \right\|_{2} = 1$. On the other hand,  since $\mathcal{A}_{1}$ is positive semi-definite matrix from the Kellogg's lemma, we have $\left\|\mathcal{V}_{\alpha} \right\|_{2} \leq 1$ for any $ \alpha > 0 .$ It leads to 
\begin{equation*}
\|\mathcal{P}_{\alpha}\|_{2} \leq\|\mathcal{Q}_{\alpha}\|_{2}\|\mathcal{V}_{\alpha}\|_{2} =\|\mathcal{V}_{\alpha}\|_{2} \leq 1, \quad \forall \alpha>0.
\end{equation*}
Thus, $|\lambda|\leq 1.$ In the following, we further prove that $|\lambda| < 1,$ for any $\alpha >0.$ We will argue it by contradiction. If $|\lambda| = 1 ,$ then there  exists  $\theta \in (-\pi,\pi]$ such that 
$$\mathcal{P}_{\alpha} x=e^{\imath \theta} x,$$
which is equivalent to,

\begin{equation}\label{eq10}
\mathcal{V}_{\alpha} x=e^{\imath \theta} \mathcal{Q}_{\alpha}^{*} x.
\end{equation}
Cosquently,
\begin{equation}\label{eq8}
\|\mathcal{V}_{\alpha} x\|_{2}=\|x\|_{2}.
\end{equation}
Substituting $\mathcal{V}_{\alpha}$ into \eqref{eq8} and using the change of  variable 
$$w:=(u;v;p)=(\alpha \mathcal{I}+\mathcal{A}_{1})^{-1} x, \quad \text { with } \quad u \in \mathbb{C}^{n} \quad ,  v \in \mathbb{C}^{m} \text { and } \quad p \in \mathbb{C}^{l},$$ 
gives
\begin{equation}\label{eq9}
\|(\alpha \mathcal{I}-\mathcal{A}_{1}) w\|_{2}=\|(\alpha \mathcal{I}+\mathcal{A}_{1}) w\|_{2}.
\end{equation}
From \eqref{eq9}, it holds 
$$w^{*} (\mathcal{A}_{1}+\mathcal{A}_{1}^{*}) w = 0,$$
or equivalently,
\begin{equation*}
\left(\begin{array}{ccc}
u^{*} & v ^{*} & p^{*}
\end{array}\right) \left(\begin{array}{ccc}
{A} & {0} & {0} \\
{0} & {0} & {0} \\
{0} & {0} & {0}
\end{array}\right) \left(\begin{array}{l}
u \\
v \\
p
\end{array}\right)=0,
\end{equation*}
which leads to $u=0$, due to the symmetric positive definiteness of $A.$ On one hand, from \eqref{eq10} we have 
\begin{equation*}
(\alpha \mathcal{I}-\mathcal{A}_{1}) w=e^{\imath \theta} \mathcal{Q}_{\alpha}^{*}(\alpha \mathcal{I}+\mathcal{A}_{1}) w,
\end{equation*}
which results in
\begin{equation*}
\left(\begin{array}{ccc}
	\alpha I-A & -B^{T} & {0} \\
	B & \alpha I & {0} \\
	{0} & {0} & \alpha I \\
\end{array}\right)\left(\begin{array}{c}
	0 \\
	v \\
	p
\end{array}\right)=e^{\imath \theta} \mathcal{Q}_{\alpha}^{*}\left(\begin{array}{ccc}
	\alpha I+A & B^{T} & {0} \\
	-B & \alpha I & {0} \\
	{0} & {0} & \alpha I \\
\end{array}\right)\left(\begin{array}{l}
	0 \\
	v \\
	p
\end{array}\right).
\end{equation*}
So
\begin{equation*}
\left(\begin{array}{c}
	-B^{T} v \\
	\alpha v \\
	\alpha p
\end{array}\right)=e^{\imath \theta} \mathcal{Q}_{\alpha}^{*}\left(\begin{array}{c}
	B^{T} v \\
	\alpha v \\
	\alpha p
\end{array}\right).
\end{equation*}
The above equality can be rewritten as 
\begin{equation*}
(\alpha \mathcal{I} - \mathcal{A}_{2})\left(\begin{array}{c}
-B^{T} v \\
\alpha v \\
\alpha p
\end{array}\right)=e^{\imath \theta} (\alpha \mathcal{I} + \mathcal{A}_{2}) \left(\begin{array}{c}
B^{T} v \\
\alpha v \\
\alpha p
\end{array}\right).
\end{equation*}
 It leads to
\begin{equation*}
\left(\begin{array}{ccc}
\alpha I & {0} & {0} \\
{0} & \alpha I & C^{T} \\
{0} & -C & \alpha I \\
\end{array}\right)\left(\begin{array}{c}
-B^{T} v\\
\alpha v \\
\alpha p
\end{array}\right)=e^{\imath \theta} \left(\begin{array}{ccc}
\alpha I & {0} & {0} \\
{0} & \alpha I & -C^{T} \\
{0} & C & \alpha I \\
\end{array}\right)\left(\begin{array}{l}
B^{T}v \\
\alpha v \\
\alpha p
\end{array}\right),
\end{equation*}
that is implies that
\begin{numcases}{}
B^{T} v = - e^{\imath \theta} B^{T} v, \label{eq13} \\
\alpha v + C^{T} p = e^{\imath \theta } (\alpha v - C^{T}p), \label{eq14} \\ 
- C v+ \alpha p = e^{\imath \theta } (Cv + p). \label{eq15}
\end{numcases}
If $B^{T} v = 0,$ then we can easily see that $\mathcal{A}_{1} v = 0.$ It is immediate to conclude that $\mathcal{A}_{1} x = 0,$ which is a contradiction with the consideration $x \notin null(\mathcal{A}_{1}).$ Thereby, from  \eqref{eq13}, it holds $e^{\imath \theta}=-1.$ Substituting the identity $e^{\imath \theta}=-1$ into \eqref{eq14} and \eqref{eq15}, derives $v=0$ and  $p=0,$ respectively. So $w=0.$ Consequently, we have $x=(\alpha \mathcal{I}+\mathcal{A}_{1})w=0,$ which contradicts
with the fact that $x$ is a non-zero vector. Therefore, $|\lambda| < 1 .$  

In summary, from the  \textit{Cases  1-4}, we see that $\vartheta(\mathcal{T}_{\alpha})<1.$
\end{proof}

\begin{theorem}
	 Suppose that the assumptions of Theorem \ref{th2} hold. Then, the APSS iteration method \eqref{eq3}  is semi-convergent for any $\alpha>0$. 
	 \begin{proof}
	 	The proof immediately follows from Eq. \eqref{indp} and Theorem \ref{th2}.
	 \end{proof}
\end{theorem}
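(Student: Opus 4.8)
The plan is to verify directly the two hypotheses of the semi-convergence characterization quoted above (the theorem of \cite{Semi}), namely that $\text{index}(\mathcal{I}-\mathcal{T}_{\alpha})=1$ and that the pseudo-spectral radius satisfies $\vartheta(\mathcal{T}_{\alpha})<1$; once both are in hand, the conclusion — together with the closed form of the limit $\mathbf{x}^{(*)}$ in terms of the Drazin inverse — is immediate.

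First I would record that the APSS iteration \eqref{eq3} is a \emph{consistent} splitting iteration: since $\mathcal{A}=\tilde{\mathcal{M}}_{\alpha}-\tilde{\mathcal{N}}_{\alpha}$ with $\tilde{\mathcal{M}}_{\alpha}$ nonsingular and $\mathcal{T}_{\alpha}=\mathcal{I}-\tilde{\mathcal{M}}_{\alpha}^{-1}\mathcal{A}$, the fixed points of \eqref{eq3} with $f=\tilde{\mathcal{M}}_{\alpha}^{-1}\mathbf{b}$ are exactly the solutions of $\mathcal{A}\mathbf{x}=\mathbf{b}$, so the whole question reduces to spectral information about $\mathcal{T}_{\alpha}$. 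For the index condition I would use the similarity of $\mathcal{T}_{\alpha}$ to $\mathcal{L}_{\alpha}$ of \eqref{eq4} coming from the commutation identities \eqref{Eqq1}--\eqref{Eqq2}: since $\mathcal{A}_{1}$ and $\mathcal{A}_{2}$ are both positive semi-definite, Kellogg's lemma (Lemma \ref{lem1}) gives $\|\mathcal{L}_{\alpha}\|_{2}\le 1$, whence $\mathcal{L}_{\alpha}\mathbf{x}=\mathbf{x}$ if and only if $\mathcal{L}_{\alpha}^{*}\mathbf{x}=\mathbf{x}$, so $\text{index}(\mathcal{I}-\mathcal{L}_{\alpha})=1$; because similar matrices share the same index, this is precisely \eqref{indp}, i.e. $\text{index}(\mathcal{I}-\mathcal{T}_{\alpha})=1$.

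For the second condition I would simply invoke Theorem \ref{th2}, which under the standing hypotheses ($A$ SPD, $B$ arbitrary, $C$ rank-deficient) already delivers $\vartheta(\mathcal{T}_{\alpha})<1$ for every $\alpha>0$. Assembling \eqref{indp} with the bound of Theorem \ref{th2}, the semi-convergence theorem of \cite{Semi} applies verbatim, yielding semi-convergence of \eqref{eq3} for all $\alpha>0$, and the quoted Drazin-inverse formula identifies the limit.

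The genuine difficulty therefore lies entirely upstream: the hard part is Theorem \ref{th2} itself — the four-case eigenvector analysis ruling out eigenvalues of $\mathcal{T}_{\alpha}$ of modulus one other than $\lambda=1$, where the rank-deficiency of $C$ together with the SPD-ness of $A$ and the (here essential) full row rank of $B$ are used — and the non-expansiveness argument behind \eqref{indp}. Given those, the present statement is a one-line synthesis; the only point demanding a moment's care is confirming consistency of the iteration, so that ``limit of the sequence'' genuinely means ``a solution of $\mathcal{A}\mathbf{x}=\mathbf{b}$'', which follows at once from the nonsingularity of $\tilde{\mathcal{M}}_{\alpha}$.
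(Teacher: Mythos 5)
Your proposal is correct and follows exactly the paper's route: the paper's proof is the one-line combination of the index result \eqref{indp} with Theorem \ref{th2} via the semi-convergence characterization of \cite{Semi}, which is precisely what you assemble (your extra remarks on consistency and on re-deriving \eqref{indp} merely restate material already established in Section \ref{Sec3}). No gaps.
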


\section{Estimation strategy for the parameter $\alpha$}\label{Sec4}

Finding the optimal parameter $\alpha$ of the APSS method  is generally hard. In  this section, we propose an appropriate strategy for estimating $\alpha$ in the APSS method, which has been studied by Cao \cite{Cao2}. 

Notably, from \eqref{eq2.5} it is anticipated that $\mathcal{M}_{\alpha}$ is as close as conceivable to $ \mathcal{A}$ when $\mathcal{N}_{\alpha} \approx 0.$ In this way, having Eq. \eqref{Nalpha} in mind, the function
\begin{align*}
	\Psi(\alpha) &=\alpha\|\mathcal{I}\|_{F} \cdot \alpha\|\mathcal{I}\|_{F}-\alpha\left(\left\|\mathcal{A}_{1}\right\|_{F}+\left\|\mathcal{A}_{2}\right\|_{F}\right)+\left\|\mathcal{A}_{1}\right\|_{F}\left\|\mathcal{A}_{2}\right\|_{F} \\
	&=(n+m+l) \alpha^{2}-\alpha\left(\left\|\mathcal{A}_{1}\right\|_{F}+\left\|\mathcal{A}_{2}\right\|_{F}\right)+\left\|\mathcal{A}_{1}\right\|_{F}\left\|\mathcal{A}_{2}\right\|_{F}\\
		&={\bf{n}} \alpha^{2}-\alpha\left(\left\|\mathcal{A}_{1}\right\|_{F}+\left\|\mathcal{A}_{2}\right\|_{F}\right)+\left\|\mathcal{A}_{1}\right\|_{F}\left\|\mathcal{A}_{2}\right\|_{F},
\end{align*}
 can be characterized. Minimizing $\Psi(\alpha)$ with respect to  $\alpha$ leads to the estimation parameter $\alpha_{est}$ being set to
 $$\alpha_{es t}=\frac{\left\|\mathcal{A}_{1}\right\|_{F}+\left\|\mathcal{A}_{2}\right\|_{F}}{2{\bf{n}}}.$$
Since the matrix $A$ is SPD (in this case, $\mathcal{A}_{1}\neq 0$), one can conclude that $\alpha_{est}>0.$ In the following section, the efficiency of this choice will be verified.
\section{Numerical experiments} \label{Sec5}
 To indicate the efficiency of the APSS preconditioner, we have conducted some numerical tests. We provide two examples and each one is scaled by a symmetric diagonal matrix. To this end, the coefficient matrix $\mathcal{A}$ is replaced by the matrix $ \mathcal{D}^{-\frac{1}{2}} \mathcal{A} \mathcal{D}^{-\frac{1}{2}},$ in which ${\cal D}=\diag (\| {\mathcal{A}}_{1}\|_2,\ldots,\|{\mathcal{A}}_{\bf  n}\|_2)$. In addition, the $j$th column of the matrix $\mathcal{A}$ is represented by $\mathcal{A}_{j}.$
 
 For all the examples, a zero vector  was used  as the initial guess, and the iteration  was terminated once
$$Res=\frac{\left\|{\mathbf{b}}-\mathcal{A} {\bf{x}}^{(k)}\right\|_{2}}{\left\|{\mathbf{b}}\right\|_{2}} \leq 10^{-7},$$ 
or the specified number of iteration steps, $maxit=2000$, is exceeded. Note that ${\bf{x}}^{(k)}$ stands for the computed solution at the $k$th iteration.  The right hand side vector $\bf{b}$ was chosen such that the exact solution of \eqref{eq1} was a vector of all ones.  
 
 In the following, we apply the complete version of FGMRES method with right preconditioning technique in conjunction with APSS‌ preconditioner  $\mathcal{M}_{\alpha}.$ For the APSS preconditioner, we need to solve two SPD linear systems including $\alpha I+A+\frac{1}{\alpha} B^{T}B$ and $\alpha ^{2} I+C C^{T}$ as sub-tasks. To solve these linear systems, we employ the  conjugate gradient (CG) method without preconditioning and the iteration was stopped when the residual 2-norm  is reduced by a factor of $10^{3}$ or the maximum number of inner iteration is reached  to  200.
 % We discovered through experimentation that $\alpha=0.04$ is a good choice.
 
 In the tables below, we use ``CPU", ``IT" and ``RES" to represent the elapsed CPU time to converge in second, iteration counts and relative residual, respectively. In addition, Degree of freedom (DOF) is defined as $DOF=n+m+l={\bf n}$. Finally, a dagger (\dag), means that more than the maximum number of iterations are needed to converge. All the examples were performed in \textsc{Matlab-R2019a}.  All the numerical results were obtained by a  Laptop  by the following features:
 
 \hspace{2cm} intel (R), Core(TM) i5-8265U, CPU @ 1.60 GHz, 8 GB.

 \begin{example}\label{ex1} \rm
 	Consider the two dimensional leaky lid-driven cavity problem
 	\begin{equation}\label{Stokes}
 	\begin{dcases}
 	-\Delta \mathbf{u}+\nabla p=0, & \text { in } \Omega, \\
 	\nabla \cdot \mathbf{u}=0, & \text { in } \Omega,
 	\end{dcases}
 	\end{equation}	
 	in which a suitable boundary conditions are applied on the side and bottom points. $\mathbf{u}$ and $p$  symbolized the velocity vector field and the pressure scalar field, respectively. In addition, $\Delta$ and $\nabla$ refer to the vector Laplacian in $\Bbb{R}^{2}$ and  the gradient, respectively. This problem is called the Stokes problem. For the saddle point problem \eqref{eq1}, the matrices $A$ and $B$ come from the Stokes problem. To obtain these matrices, we use the IFISS package by Elman et al. \cite{IFISS}. Note that the stablized $Q_{1}-P_{0}$ and $Q_{2}-P_{1}$ finite element method (FEM) is employed to discretize the Stokes equation \eqref{Stokes}. The grid parameters are chosen as $h=\frac{1}{8}, \frac{1}{16}, \ldots, \frac{1}{256},$ for all uniform or stretched grid points. Now, the matrix $C$ is taken to be the form 
 	$$C=\left(\begin{array}{l}
 		C_{1} \\
 		C_{2}
 	\end{array}\right)=\left(\begin{array}{l}
 	C_{1} \\
 	c_{2}\\
 	c_{2}
 	\end{array}\right) \in \mathbb{R}^{(l+2)\times m}, $$
 	where $C_{1}=
 	\left(\begin{array}{ll}
 \operatorname{diag}\{1,3,5, \ldots, 2 l-1\} & \operatorname{randn}(l, m-l)
 	\end{array}\right)\in \mathbb{R}^{l\times m},$
 	and
 	$c_1=
 	\left(\begin{array}{ll}
 	e^{T}; &0^{T} \end{array}\right) C_{1},$
 	 $ c_2=
 	\left(\begin{array}{ll}
 	0^{T}; &e^{T} \end{array}\right) C_{1},$
 	$e^{T}=	\left(\begin{array}{llll}
	1; &1;&1;\cdots,&1 \end{array}\right) \in \mathbb{R}^{\frac{l}{2}}. $
 	Note that $l=m-2.$ Here, $\operatorname{randn}(l, m-l)$ is a normally distributed random matrix of order $l\times m-l.$
 	
 	This example is a technical variant of Example 1 in \cite{A1}. 
 	%By 
 	%scrutinizing Fig. \ref{fig1}, one can figure out that $\alpha=0.01$ can be an %appropriate choice. 
 	The results are given in Tables \ref{tab1}-\ref{tab4}. From these tables, we can observe that the FGMRES method in conjunction with the  
 	APSS preconditioner $\mathcal{M}_{\alpha}$ is strongly more efficient than the FGMRES without a preconditioner. As we can see from Figure \ref{fig1}, the preconditioner $\mathcal{M}_{\alpha}$ is efficient to cluster the eigenvalues of the original coefficient matrix. Another observation which can be posed here is that the number of  iterations of the FGMRES method with the APSS preconditioner  remains almost constant when the size of the problem is increased, whereas this is not the case for FGMRES without preconditioning.
 	 \begin{figure}[!htp]
 		\begin{center}
 			\includegraphics[height=3.75cm,width=5cm]{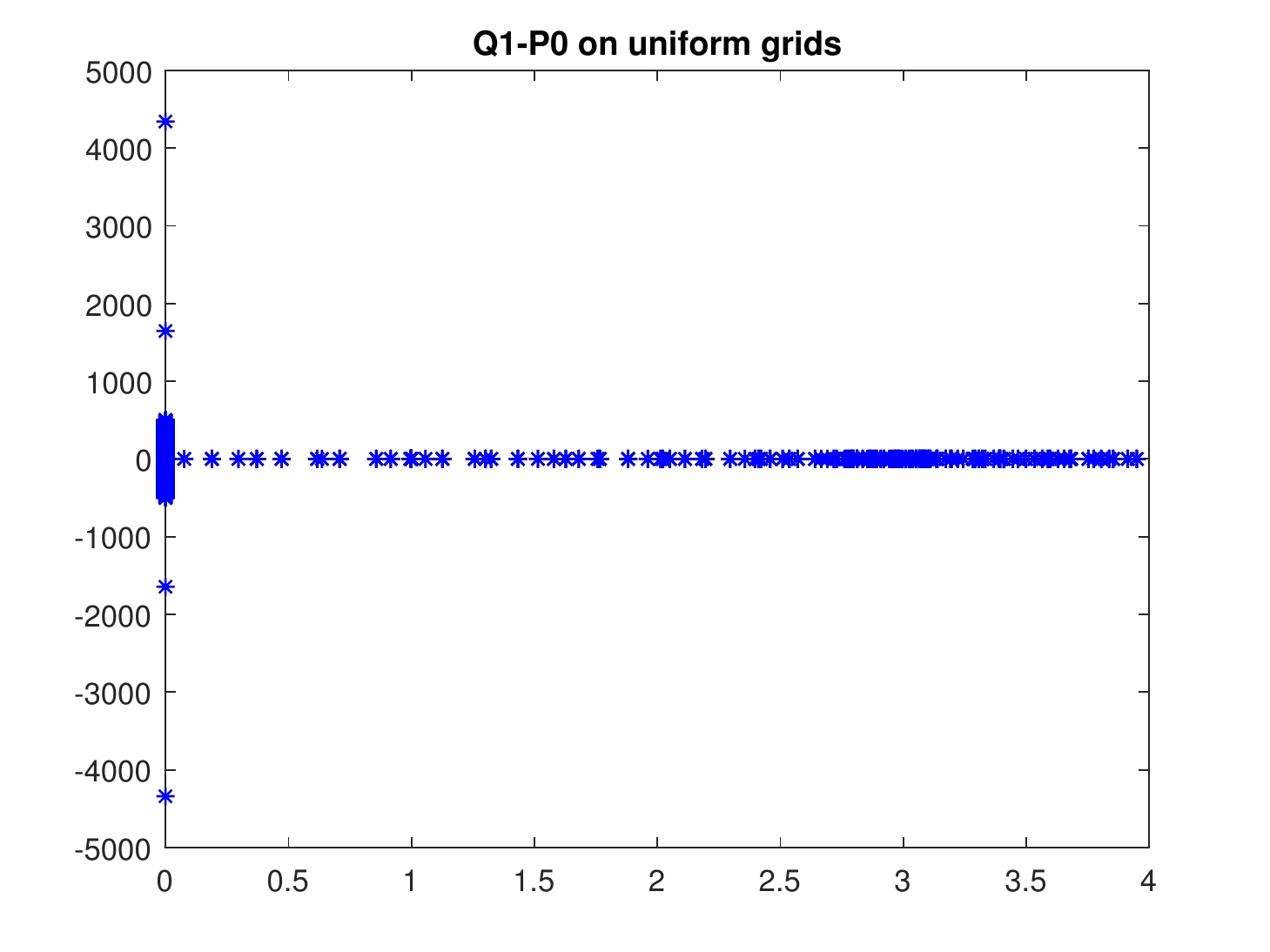}
 			\hspace*{0.2cm}		
 			\includegraphics[height=3.75cm,width=5cm]{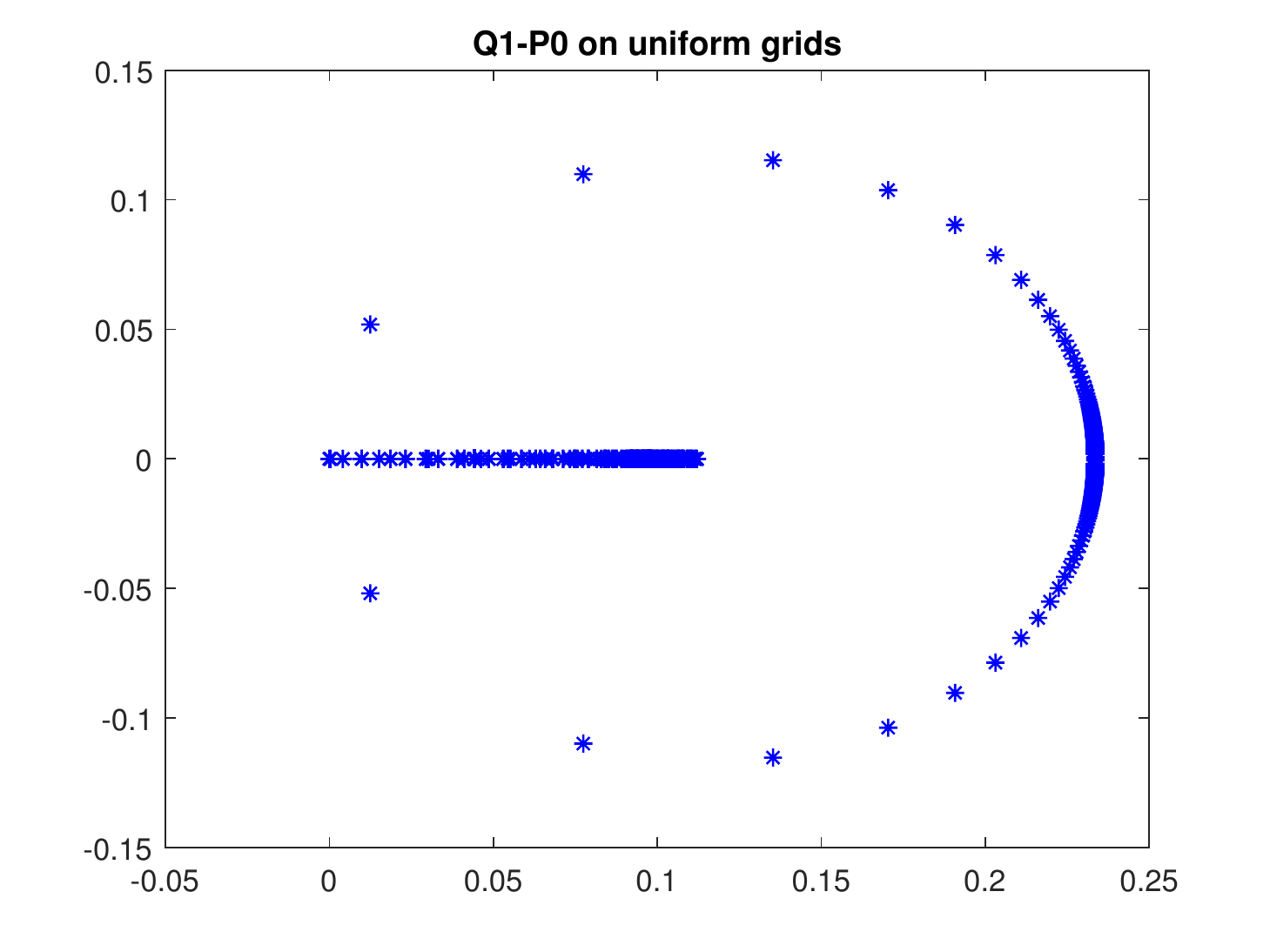}
 			\hspace*{0.2cm}
 		\includegraphics[height=3.75cm,width=5cm]{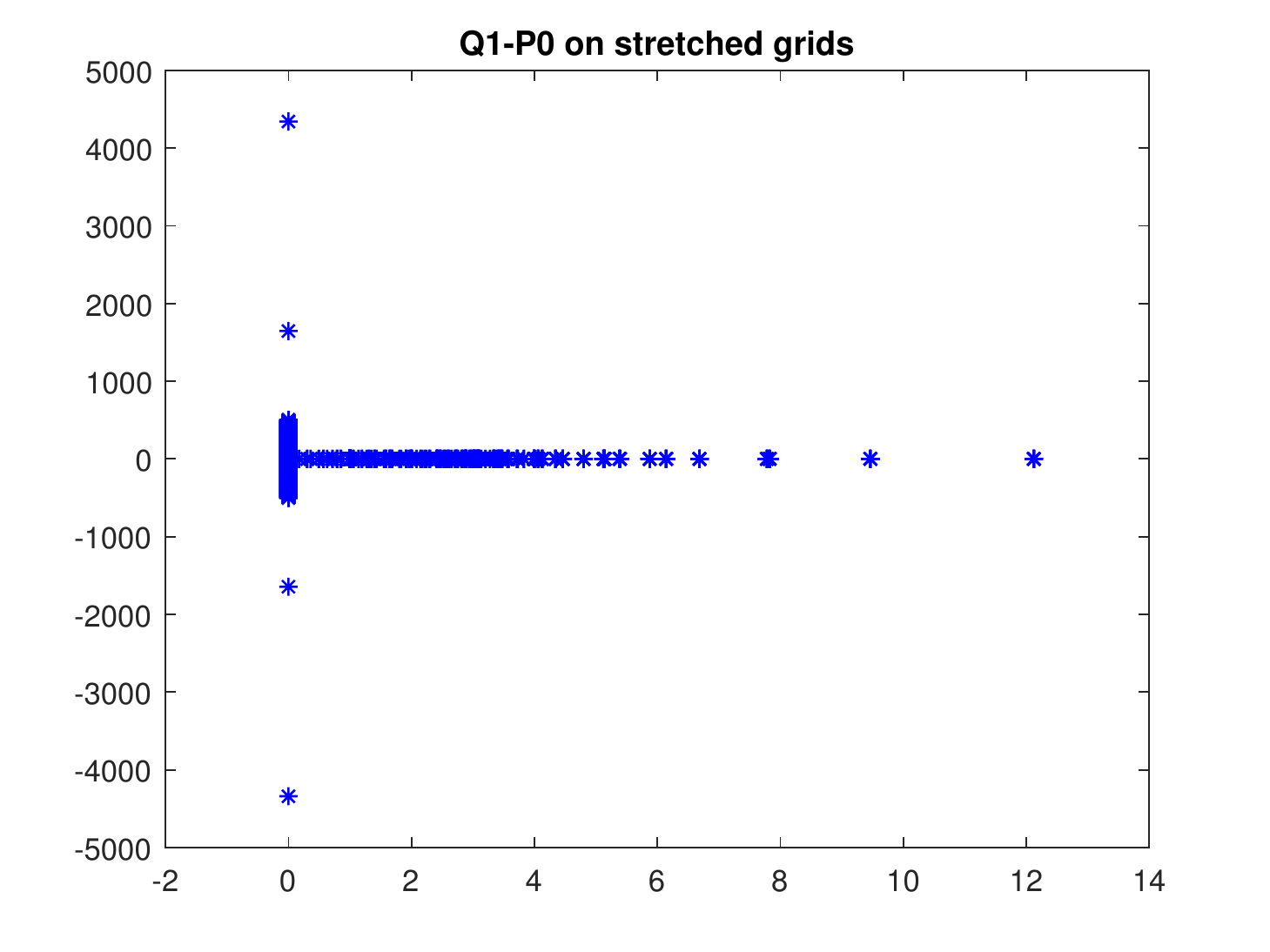}
 		\hspace*{0.2cm}	
 		\includegraphics[height=3.75cm,width=5cm]{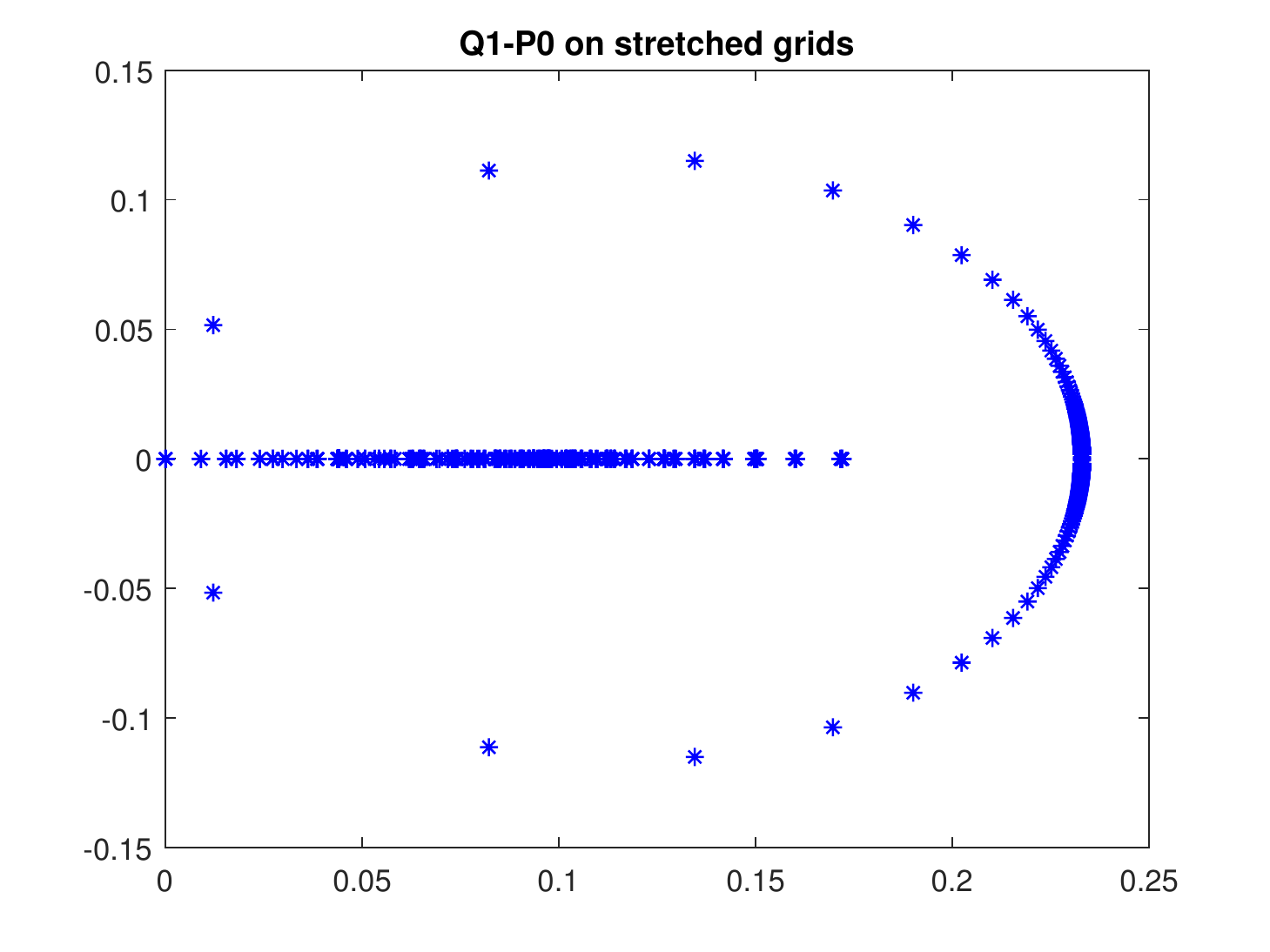}
% 			\end{center}
% 	\caption{{\small IT vs the values of $\alpha=10^{k}$ for $h=\frac{1}{128}$ with Q1-P0 on uniform grids, Q1-P0 on stretched grids, Q2-P1 on uniform grids and Q2-P1 on stretched grids (from the left to right) for Example \ref{ex1}  }. \label{fig1} }	
% \end{figure}
%	\begin{figure}[H]
%		\begin{center}
			\includegraphics[height=3.75cm,width=5cm]{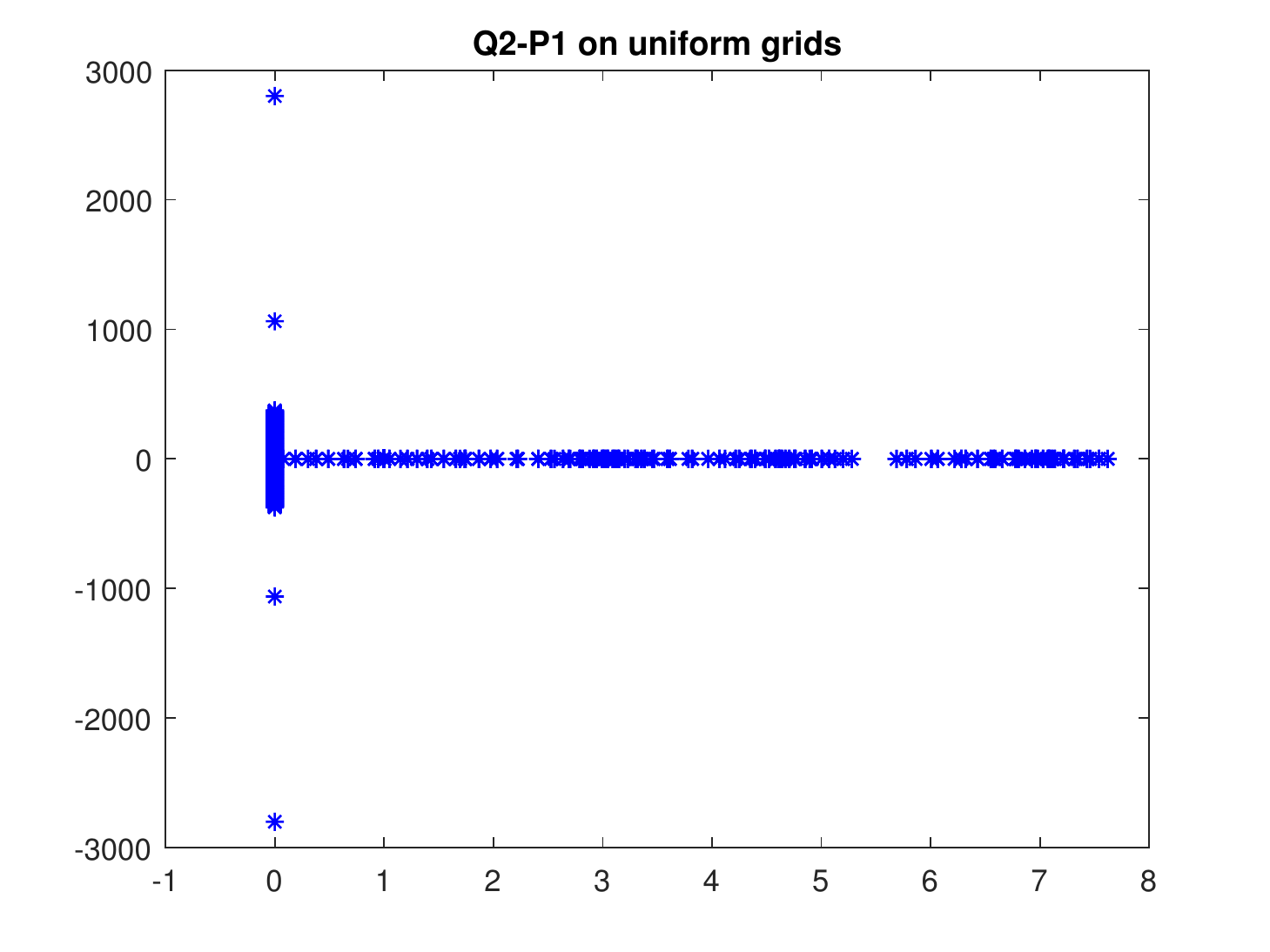}
 		\hspace*{0.2cm}
 	\includegraphics[height=3.75cm,width=5cm]{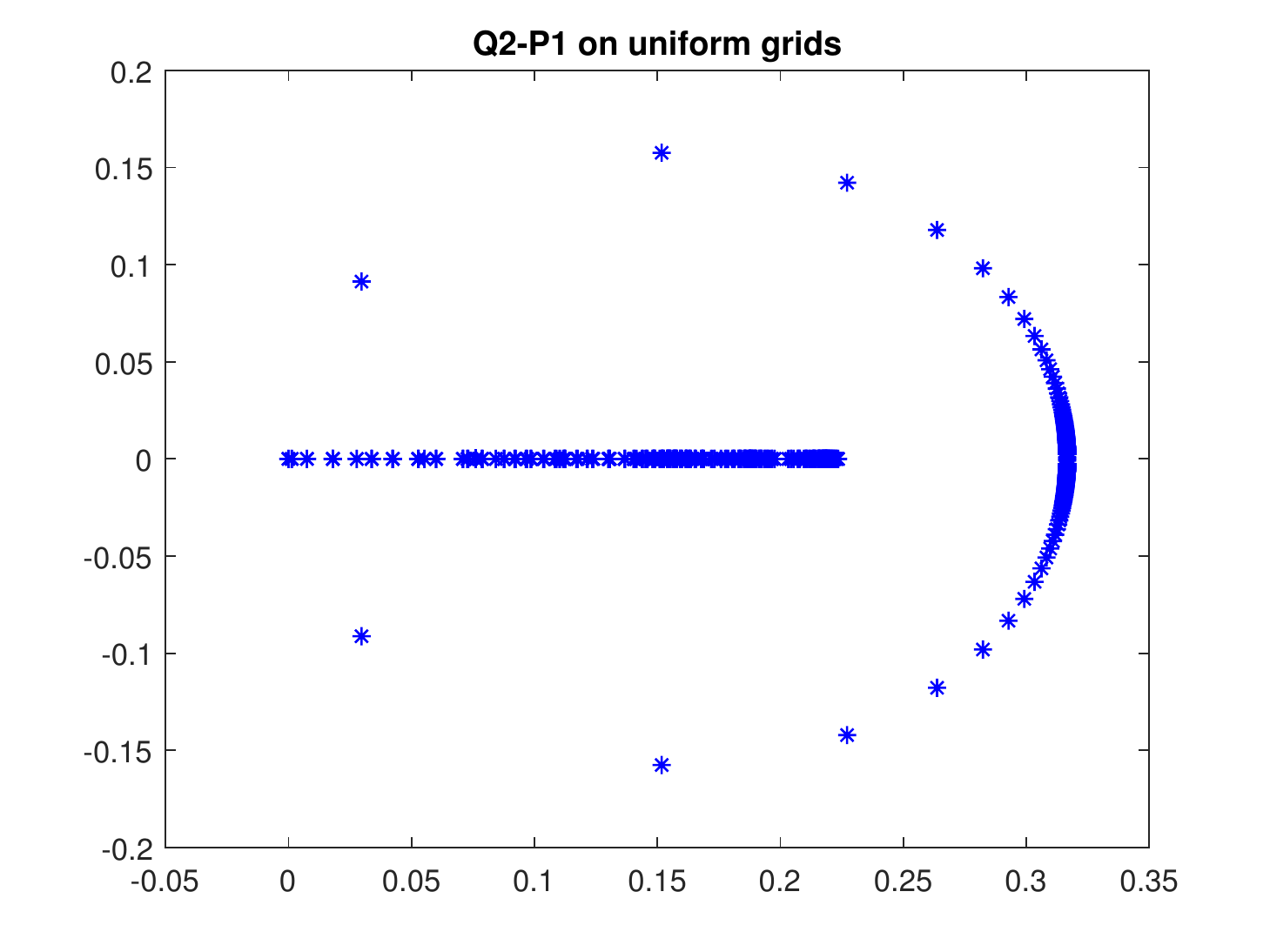}
 	\hspace*{0.2cm}
 		\includegraphics[height=3.75cm,width=5cm]{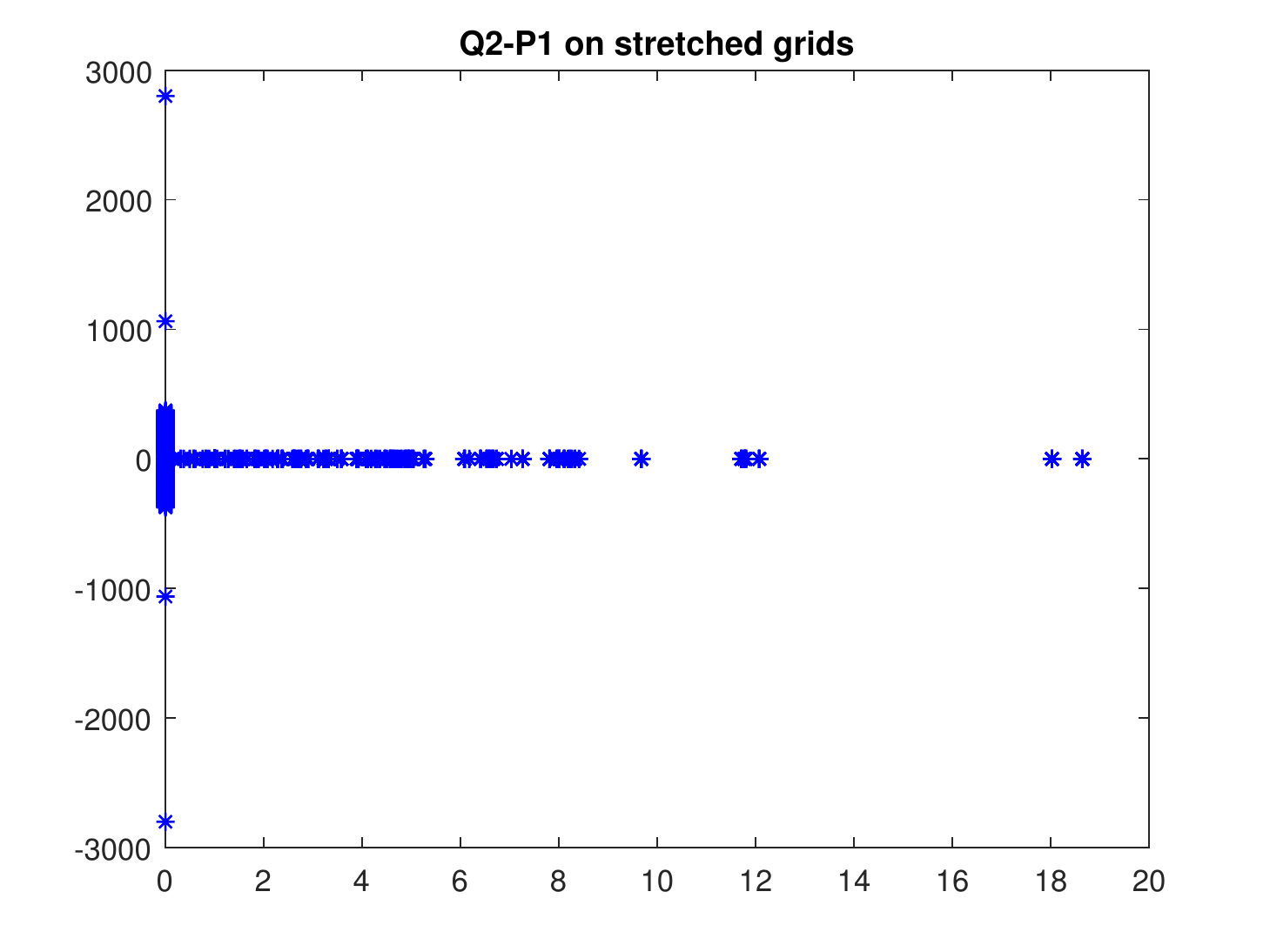}
 		\hspace*{0.2cm}		
 			\includegraphics[height=3.75cm,width=5cm]{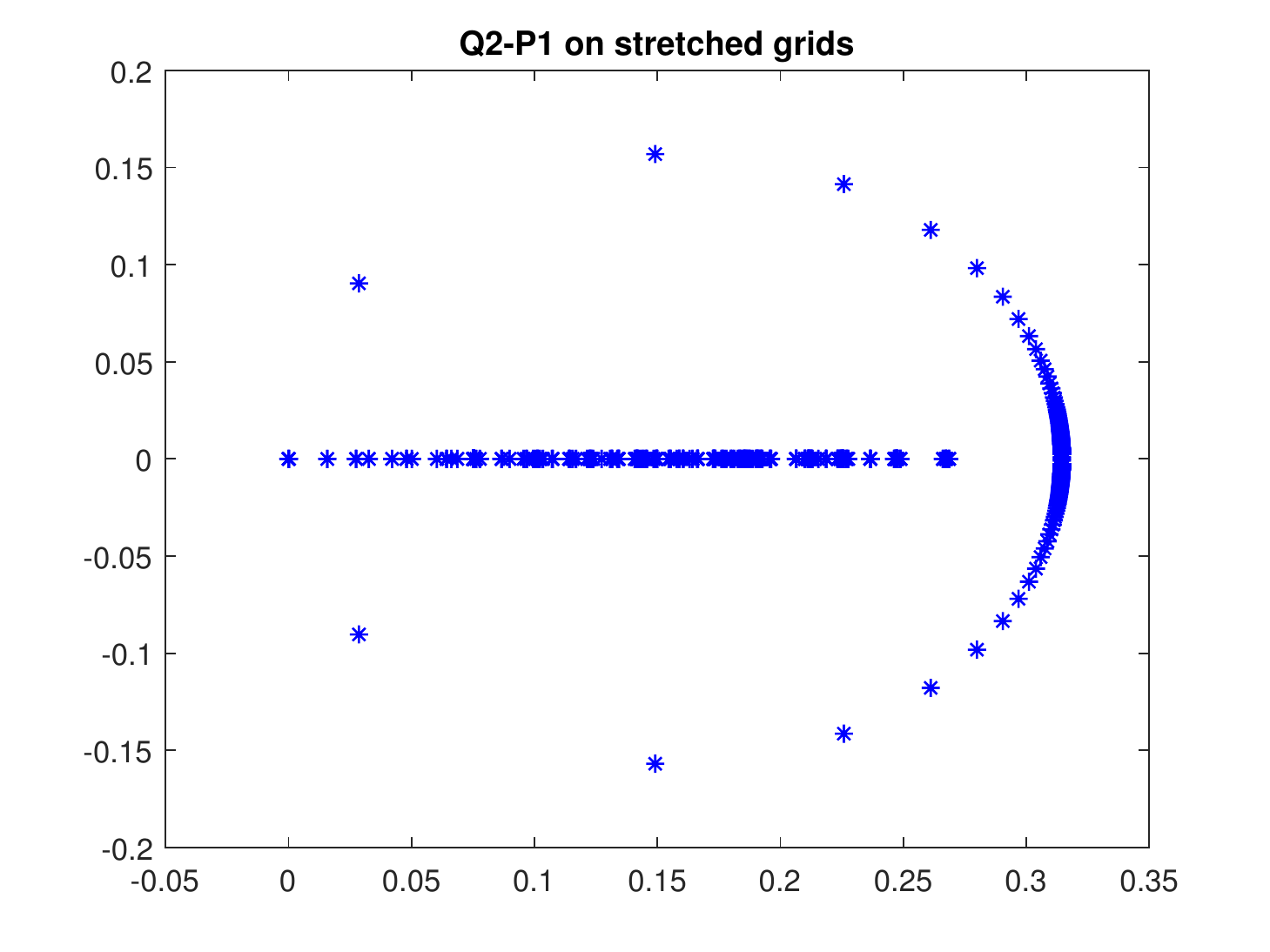}
 			%\hspace*{0.2cm}		
 			
 		\end{center}
 		\caption{ Eigenvalues distribution of $\mathcal{A}$ and $\mathcal{M}_{\alpha}^{-1} \mathcal{A}$ (from the left to right) for $h=\frac{1}{16}$ for Example \ref{ex1} . \label{fig1} }	
	\end{figure}

 	\begin{table}[!t]
 		\centering
 		\caption{\small{Numerical results for FGMRES to solve Example \ref{ex1} with $Q_{1}-P_{0} $ FEM on uniform grids.}}\label{tab1}\vspace{0.25cm}
 		\begin{tabular}{|p{0.8cm}| p{1.3cm}|p{1.4cm}p{1.4cm}p{1.4cm} p{1.5cm} p{1.8cm}p{1.9cm}| }
 			%\multicolumn{6}{l}{}\\
 			\hline
 			\multirow{1}{*}{Prec.}   & $h (DOF)$            &  $\frac{1}{8}$ (286)   & $\frac{1}{16} (1086) $  &  $\frac{1}{32} (4222) $  &  $\frac{1}{64} (16638) $& $\frac{1}{128}(66046)$&$\frac{1}{256}(263166)$ \\
 			\hline\hline
 			\multirow{3}{*}{$\mathcal{I}$}&
 		     IT&49&103&552&188&474&1492\\
 			& CPU&0.0416&0.0600&0.4866&0.5810&4.1456&135.3632\\
 			& RES&2.2e-08&9.9e-08&1.0e-07&9.3e-08&9.8e-08&1.1e-07\\
 			\hline
 			\multirow{4}{*}{$\mathcal{M}_{\alpha}$}&
 		    $\alpha_{est}$&0.0396&0.0201&0.0101&0.0051&0.0025&0.0013\\
 			&IT&11&13&11&9&10&11\\
 		& CPU&0.0795&0.1009&0.0915&0.1807&0.6871&6.6438\\
 		& RES&9.9e-08&4.6e-08&8.9e-08&3.1e-08&2.3e-08&6.6e-08\\
 		\hline
 			
 			\hline
 		\end{tabular}
 %	\end{table}	
 %\begin{table}[!t]
 	\centering
 	\caption{\small{Numerical results for FGMRES to solve Example \ref{ex1} with $Q_{1}-P_{0} $ FEM on stretched grids.}}\label{tab2}\vspace{0.25cm}
 	\begin{tabular}{|p{0.8cm}| p{1.3cm}|p{1.4cm}p{1.4cm}p{1.4cm} p{1.5cm} p{1.8cm}p{1.9cm}| }
 		%\multicolumn{6}{l}{}\\
 		\hline
 		\multirow{1}{*}{Prec.}   &$ h (DOF) $           &  $\frac{1}{8}$ (286)   & $\frac{1}{16} (1086) $  &  $\frac{1}{32} (4222) $  &  $\frac{1}{64} (16638) $& $\frac{1}{128}(66046)$& $\frac{1}{256}(263166)$\\
 		\hline\hline
 		\multirow{3}{*}{$\mathcal{I}$}&
 		IT&48&133&93&187&406&974\\
 		& CPU&0.0391&0.0625&0.1089&0.5716&3.5570&118.1586\\
 		& RES&3.2e-08&7.5e-08&9.9e-08&9.9e-08&9.9e-08&1.0e-07\\
 		\hline
 		\multirow{4}{*}{$\mathcal{M}_{\alpha}$}&
 		$\alpha_{est}$&0.3962&0.2001&0.1011&0.0051&0.0025&0.0013\\
 		&IT&11&13&7&7&7&9\\
 		& CPU&0.0739&0.0846&0.0807&0.1851&0.5726&8.6066\\
 		& RES&6.4e-08&3.5e-08&3.1e-08&6.5e-08&9.0e-08&3.1e-08\\
 		\hline
 		
 		\hline
 	\end{tabular}
 \centering
 \caption{\small{Numerical results for FGMRES to solve Example \ref{ex1} with $Q_{2}-P_{1} $ FEM on uniform grids.}}\label{tab3}\vspace{0.25cm}
 \begin{tabular}{|p{0.8cm}| p{1.3cm}|p{1.4cm}p{1.4cm}p{1.4cm} p{1.5cm} p{1.8cm}p{1.9cm}|  }
 	%\multicolumn{6}{l}{}\\
 	\hline
 	\multirow{1}{*}{Prec.}   & $h (DOF) $           &  $\frac{1}{8}(254)$   & $\frac{1}{16} (958) $  &  $\frac{1}{32} (3710) $  &  $\frac{1}{64} (14590) $& $\frac{1}{128}(57854)$&$\frac{1}{256} (230398)$ \\
 	\hline\hline
 	\multirow{3}{*}{$\mathcal{I}$}&
    IT&51&130&186&253&947&$\dagger$\\
 	& CPU&0.0391&0.0653&0.1494&0.6904&7.8228&\\
 	& RES&8.8e-08&9.0e-08&9.8e-08&9.9e-08&1.0e-07&\\
 	\hline
 	\multirow{4}{*}{$\mathcal{M}_{\alpha}$}&
 	$\alpha_{est}$&0.0419&0.0214&0.0108&0.0054&0.0027&0.0013\\
 	&IT&11&14&13&10&12&12\\
 	& CPU&0.0737&0.0771&0.0978&0.2462&1.0375&9.0643\\
 	& RES&3.5e-08&2.6e-08&7.7e-08&7.3e-08&5.8e-08&9.0e-08\\
 	\hline
 	
 	\hline
 \end{tabular}
\centering
\caption{\small{Numerical results for FGMRES to solve Example \ref{ex1} with $Q_{2}-P_{1} $ FEM on stretched grids.}}\label{tab4}\vspace{0.25cm}
\begin{tabular}{|p{0.8cm}| p{1.3cm}|p{1.4cm}p{1.4cm}p{1.4cm} p{1.5cm} p{1.8cm}p{1.9cm}| }
	%\multicolumn{6}{l}{}\\
	\hline
	\multirow{1}{*}{Prec.}   & $h (DOF) $           &  $\frac{1}{8}(254)$   & $\frac{1}{16} (958) $  &  $\frac{1}{32} (3710) $  &  $\frac{1}{64} (14590) $& $\frac{1}{128}(57854)$&$\frac{1}{256} (230398)$ \\
	\hline\hline
	\multirow{3}{*}{$\mathcal{I}$}&
	IT&55&131&121&198&512&1137\\
	& CPU&0.0380&0.0782&0.1123&0.5466&4.1606&129.2754\\
	& RES&6.5-08&1.0e-07&1.0e-07&9.5e-08&9.9e-08&1.0e-07\\
	\hline
	\multirow{4}{*}{$\mathcal{M}_{\alpha}$}&
	$\alpha_{est}$&0.4195&0.0214&0.0108&0.0054&0.027&0.014\\
	&IT&11&13&8&9&9&10\\
	& CPU&0.0732&0.0722&0.0833&0.2363&0.9544&10.7704\\
	& RES&9.6e-08&9.5e-08&9.8e-08&2.1e-08&4.2e-08&8.0e-08\\
	\hline
	
	\hline
	\end{tabular}
 \end{table}
 \end{example}
 \begin{example}\label{ex2} \rm
 	Consider a technical modification of Example 1 in \cite{Huang1,CAMWA} as follows:
 	\begin{align*}
 	A=\left(\begin{array}{cc}
 	{I \otimes T+T \otimes I} & {0} \\
 	{0} & {I \otimes T+T \otimes I}
 	\end{array}\right) \in \mathbb{R}^{2 p^{2} \times 2 p^{2}},
 	\end{align*}
 	$$
 	B=(I \otimes F \quad F \otimes I) \in \mathbb{R}^{p^{2} \times 2 p^{2}}$$
 	 and  
 	 
 	 $$C=\left(\begin{array}{l}
 	 C_{1} \\
 	 C_{2}
 	 \end{array}\right)=\left(\begin{array}{l}
 	 C_{1} \\
 	 c_{2}\\
 	 c_{2}
 	 \end{array}\right) \in \mathbb{R}^{(p^{2}+2)\times p^{2}}, $$
 	 where $$C_{1}=
 	 E \otimes F \in \mathbb{R}^{p^{2} \times p^{2}},$$ 
 	 and
 	 $$c_1=
 	 \left(\begin{array}{ll}
 	 e^{T}; &0^{T} \end{array}\right) C_{1},
 	 \qquad c_2=
 	 \left(\begin{array}{ll}
 	 0^{T}; &e^{T} \end{array}\right) C_{1},
 	 \qquad e^{T}=	\left(\begin{array}{llll}
 	 1; &1;&1;\cdots,&1 \end{array}\right) \in \mathbb{R}^{\frac{p^{2}}{2}}. $$
 	where
 	\begin{align*}
 	T=\frac{1}{h^{2}} \cdot \operatorname{tridiag}(-1,2,-1) \in \mathbb{R}^{p \times p}, \quad F=\frac{1}{h} \cdot \operatorname{tridiag}(0,1,-1) \in \mathbb{R}^{p \times p},
 	\end{align*}
 	and $
 	E=\operatorname{diag}\left(1, p+1, 2p+1, \ldots, p^{2}-p+1\right)
 	$ in which	the Kronecker product is denoted by $\otimes$, while the discretization mesh size is represented by $h={1}/{(p+1)}$.\\
% 	Fig. 3 depicts IT vs the values of $\alpha$ and CPU vs the values of $\alpha$ for $p=128.$ From Fig. 3 it seems that $\alpha=0.01$ is a good choice. 
 	
 	Table \ref{tab5} reports the result of the FGMRES method and the FGMRES in conjunction with the proposed preconditioner $\mathcal{M}_{\alpha},$ with respect to IT, CPU and RES. Table \ref{tab5} shows that the suggested preconditioner requires significantly less iteration numbers and CPU time than the FGMRES without a preconditioner. We also see that when the size of the problem is increased, the number of iterations of the FGMRES without preconditioning increase drastically, whereas for the FGMRES with the preconditioner $\mathcal{M}_{\alpha}$ increases moderately.  Figure \ref{fig2} illustrates that the preconditioned matrix works well in clustering the eigenvalues.
  	 \begin{figure}[H]
 	\begin{center}
 		\includegraphics[height=3.75cm,width=5cm]{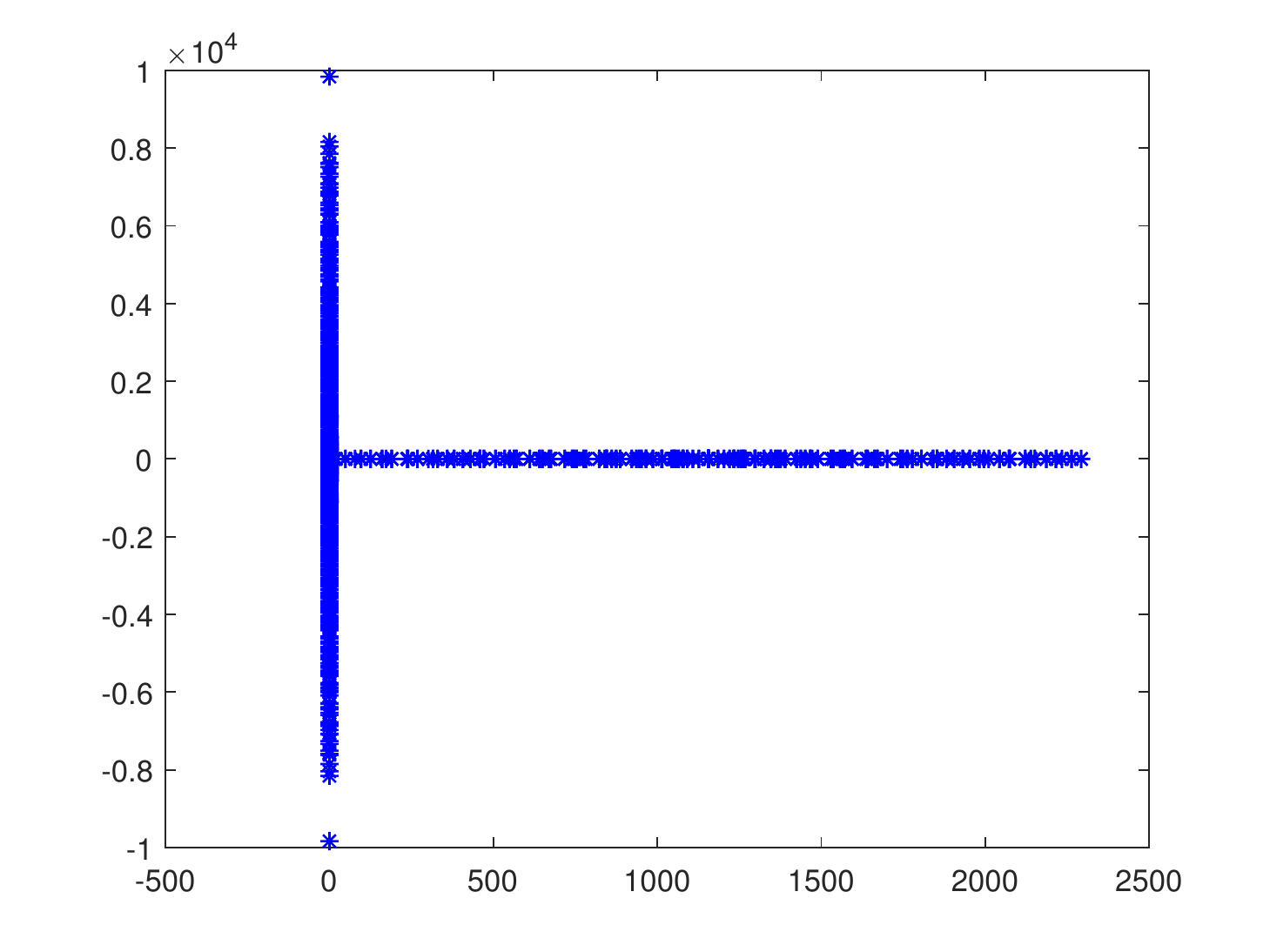}
 		\hspace*{0.2cm}		
 		\includegraphics[height=3.75cm,width=5cm]{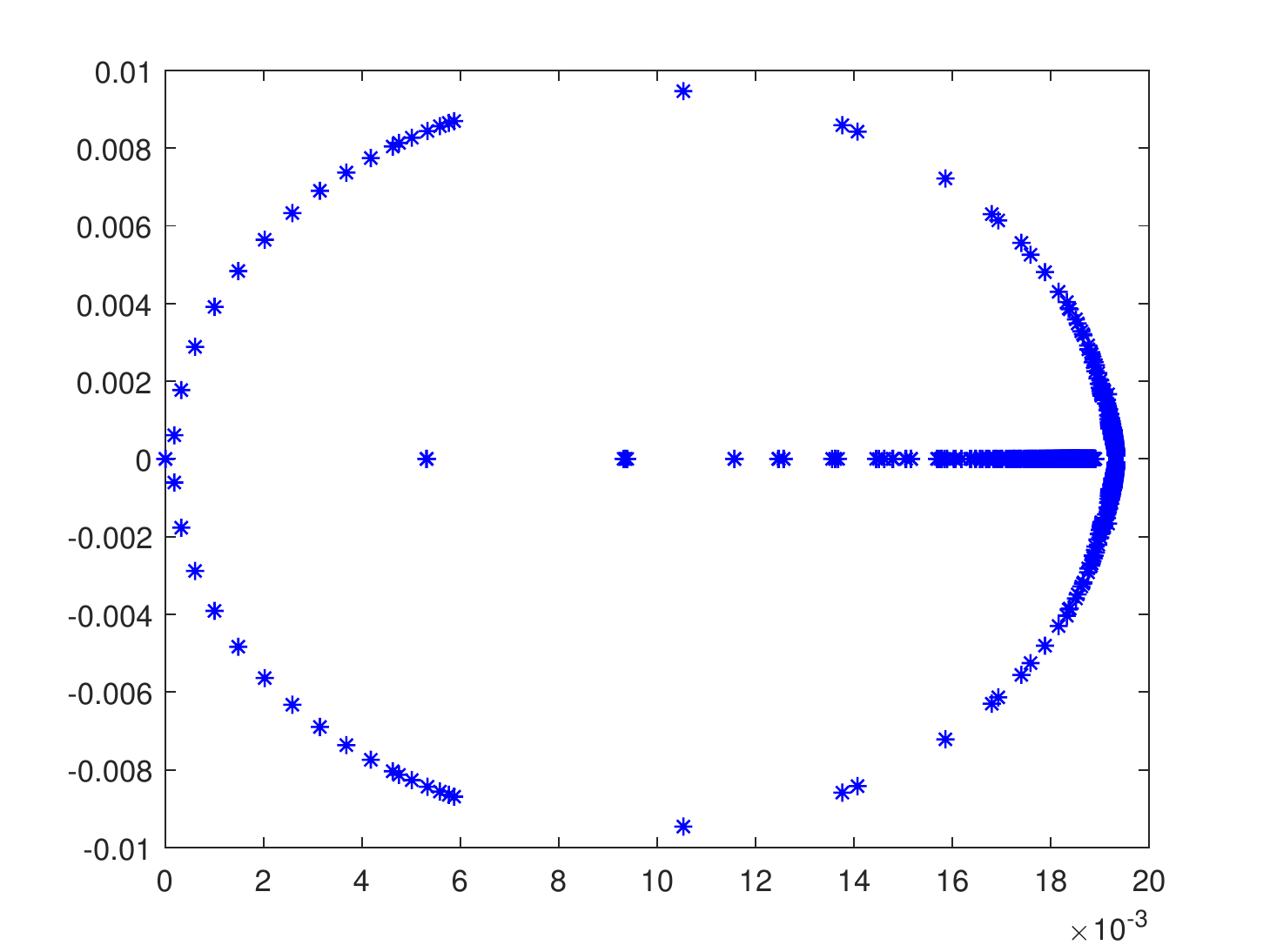}
 		\hspace*{0.2cm}
 		
 	\end{center}
 	\caption{Eigenvalue distributions of $\mathcal{A}$ and $\mathcal{M}_{\alpha}^{-1} \mathcal{A}$ (from the left to right) for $h=\frac{1}{16}$ for Example \ref{ex2} .} \label{fig2}
 \end{figure}
 		\begin{table}[!t]
 		\centering
 		\caption{Numerical results for FGMRES to solve Example \ref{ex2} .}\label{tab5}\vspace{0.25cm}
 		\begin{tabular}{|p{0.8cm}| p{1.3cm}|p{1.3cm}p{1.7cm}p{1.7cm} p{1.8cm} p{2cm}| }
 			%\multicolumn{6}{l}{}\\
 			\hline
 			\multirow{1}{*}{Prec.}   & $p (DOF)$            &  8 (258)   & 16 (1026)   &  32 (4098)   &  64 (16386) & 128 (65538) \\
 			\hline\hline
 			\multirow{4}{*}{$\mathcal{I}$}&
 			
 			IT&659&1999&$\dagger$&$\dagger$&$\dagger$\\
 			& CPU&0.1371&0.5270&&&\\
 			& RES&9.9e-08&1.0e-07&&&\\
 			\hline
 			\multirow{4}{*}{$\mathcal{M}_{\alpha}$}&
 			$\alpha_{est}$&0.0434&0.0219&0.0110&0.0055&0.0027\\
 			&IT&13&14&15&17&27\\
 			& CPU&0.0700&0.0857&0.1448&0.5156&2.5788\\
 			& RES&2.4e-08&3.6e-08&4.5e-08&8.3e-08&7.3e-08\\
 			\hline
 			
 			\hline 		
 		\end{tabular}
 	\end{table}
 	
 	\end{example}
 \section{Conclusions} \label{Sec6}
 In this paper, the APSS method was employed to solve a class of nonsymmetric three-by-three singular saddle point problems. We have applied the induced preconditioner, $\mathcal{M}_{\alpha},$ to improve the semi-convergence  rate ,when it is conjucted with FGMRES. We have proved that if $C$ is rank-deficient the APSS method is unconditionally semi-convergent. Numerical tests prove our theoretical claims.
 
 \section{Declaration}
 There is no conflict of interest.
 
 \section{Acknowledgements}
 The authors would like to that Prof. Fatemeh Panjeh Ali Beik (Vali-e-Asr University of Rafsanjan) and Dr. Mohsen Masoudi (University of Guilan) for their useful comments on an earlier version of this paper.

\end{document}